\newtheorem{theorem}{Theorem}[section]
\newtheorem{lemma}[theorem]{Lemma}
\newtheorem{remark}[theorem]{Remark}
\newtheorem{conjecture}[theorem]{Conjecture}
\begin{document}

\title{Bounds on the Information Divergence\\ for Hypergeometric Distributions}

\author{Peter Harremo{\" e}s and Franti{\v s}ek Mat{\' u}{\v s}}



\maketitle
\begin{abstract}
The hypergeometric distributions have many important applications,
but they have not had sufficient attention in information theory.
Hypergeometric distributions can be approximated by binomial distributions
or Poisson distributions. In this paper we present upper and lower
bounds on information divergence. These bounds are important for statistical
testing and a better understanding of the notion of exchange-ability. 
\end{abstract}



\section{Introduction }

If a sample of size $n$ is taken from a population of size $N$ that
consist of $K$ white balls and $N-K$ black balls then the number
of white balls in the sample has a hypergeometric distribution that
we will denote $hyp\left(N,K,n\right)$. This type of sampling without
replacement is the standard example of an exchangeable sequence. The
point probabilities are 
\[
\Pr\left(X=x\right)=\frac{\binom{K}{x}\binom{N-K}{n-x}}{\binom{N}{n}}\,.
\]
The hypergeometric distribution also appears as a count in a contingency
table under the hypothesis of independence. Therefore the hypergeometric
distribution plays an important role for testing independence and
it was shown in \cite{Harremoes2014} that the mutual information
statistic for these distributions have distributions that are closer
to $\chi^{2}$-distributions than the distribution of the classical
$\chi^{2}$-statistics.

\begin{figure}
\centering{}\includegraphics[scale=0.5]{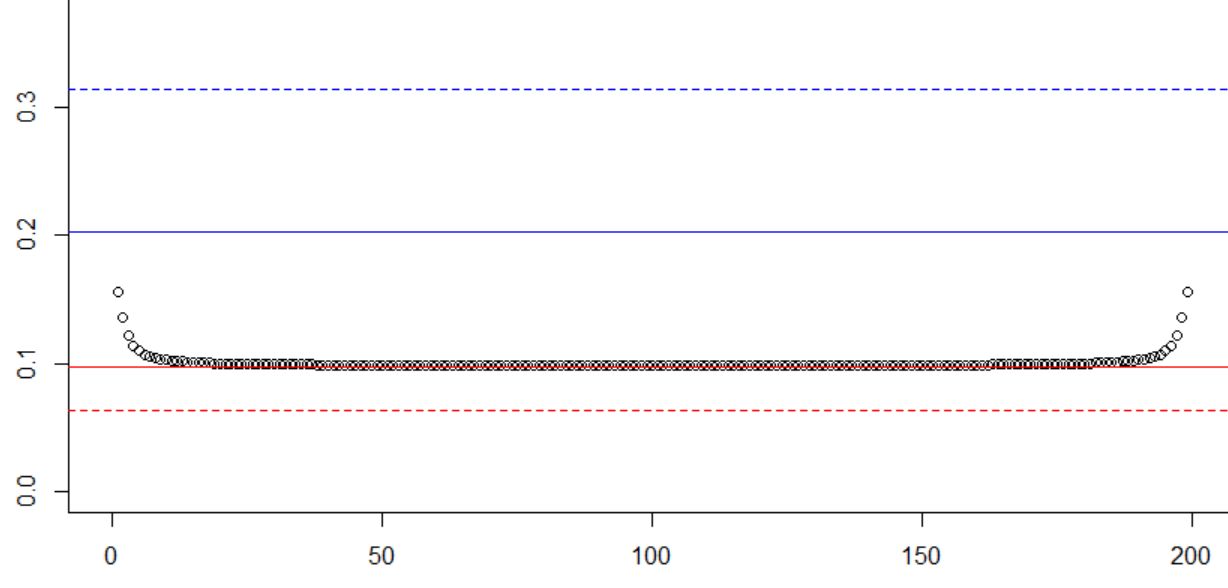}
\caption{\label{fig:1}Plot of the divergence of the hypergeometric distribution
$hyp(200,K,101)$ from the binomial distribution $bin(101;\nicefrac{K}{200})$
as a function of the number of white balls $K.$ The straight dashed
lines are the upper bound and the lower bound proved
by Stam. The solid lines are the upper bound and the lower bound proved in this paper. The
plot illustrates that a function that does not depend on $K$ can
give a very precise lower bound for most values of $K$, but a good
upper bound should depend on $K.$}
\end{figure}

Hypergeometric distributions do not form an exponential family. For
this and other reasons one often try to approximate the hypergeometric
distribution by a binomial distribution or a Poisson distribution.
This technique was also used in \cite{Harremoes2014}. In the literature
one can find many bounds on the total variation between hypergeometric
distributions and binomial distributions or Poisson distributions
\cite{Barbour1992}, but until recently there was only one paper by
Stam \cite{Stam1978} where the information divergence of a hypergeometric
distribution from a binomial distribution is bounded. As we will demonstrate
in this paper the bounds by Stam can be improved significantly. Precise
bounds are in particular important for testing because the error probability
is asymptotically determined by information divergence via Sanov's
Theorem \cite{Cover1991,Csiszar2004}. The bounds in this paper supplement
the bounds by Mat{\' u}{\v s} \cite{Matus2017}. 

We are also interested in the multivariate hypergeometric distribution
that can be approximated by a multinomial distribution. Instead of
two colors we now consider the situation where there are $C$ colors.
Again, we let $n$ denote the sample size and we let $N$ denote the
population size. Now we may consider sampling with or without replacement.
Without replacement we get a multivariate hypergeometric distribution
and with replacement we get a multinomial distribution. Stam proved
the following upper bound on the divergence
\begin{equation}
D\left(\left.hyp\right\Vert mult\right)\leq\left(C-1\right)\frac{n\left(n-1\right)}{2\left(N-1\right)\left(N-n+1\right)}\label{eq:Stam}
\end{equation}
This bound is relatively simple and it does not depend on the number
of balls of each color. Stam also derived the following lower bound,
\begin{equation}
D\left(\left.hyp\right\Vert mult\right)\geq\left(C-1\right)\frac{n\left(n-1\right)}{2\left(N-1\right)^{2}}\cdot\left(\frac{1}{2}+\frac{1}{6}\cdot\frac{Q}{C-1}\cdot\frac{N-2n+2}{\left(N-n+1\right)\left(N-2\right)}\right)\label{eq:Stamnedre}
\end{equation}
where $Q$ is a positive constant depending on the number of balls
of each color. If $\nicefrac{n}{N}$ is not close to zero there is
a significant gap between his lower bound and his upper bound. Therefore
it is unclear whether his lower bound or his upper bound gives the
best approximation of information divergence. In this paper we will
derive the correct asymptotic expression for information divergence
(Theorem \ref{thm:Asymptotisk}). We will derive relatively simple
lower bounds. We have not achieved simple expressions for
upper bounds that are asymptotically tight, but we prove that our
simple lower bounds are asymptotically tight. The problem with complicated
upper bounds seems to be unavoidable if they should be asymptotically
tight. At least the same pattern showed up for approximation of binomial
distributions by Poisson distributions \cite{Harremoes2004}. 

\begin{figure}
\begin{centering}
\includegraphics[scale=0.4]{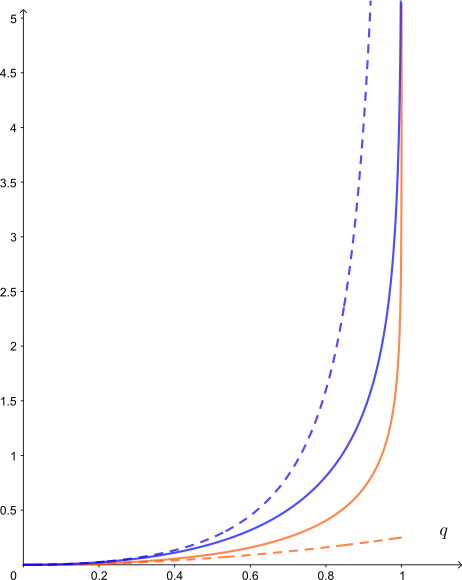}
\par\end{centering}
\caption{The figure illustrates the lower bounds and the upper bounds. The bounds given in this paper are solid while the the bounds
given by Stam are dashed. The bounds are calculated for large values
of $n$ and $N$ and the bounds are given as function of $q=\nicefrac{n}{N}.$ The
bounds of Stam are good for smal values of $q$, but for values of
$q$ close to 1 the bounds of Stam have been improved significantly.}
\end{figure}

Our upper bound on information divergence also leads to upper bounds
on total variation. Such bounds are important for the study of finite
exchange-ability compared with infinite exchange-ability \cite{Diaconis1987},
but this application will not be discussed in the present paper. 

\section{Lower bound for a Poisson approximation }

The hypergeometric distribution $hyp(N,K,n)$ has mean value $\frac{n\cdot K}{N}$
and variance 
\[
\frac{nK\left(N-n\right)\left(N-K\right)}{N^{2}\left(N-1\right)}.
\]
If $N$ is large compared with $n$ and with $K$, we may approximate
the hypergeometric distribution by a Poisson distribution with mean
$\frac{n\cdot K}{N}.$
\begin{theorem}
The divergence of the hypergeometric distribution $hyp(N,K,n)$ from
the Poisson distribution $Po\left(\lambda\right)$ with $\lambda=\frac{n\cdot K}{N}$
satisfies the following lower bound
\[
D\left(\left.hyp\left(N,K,n\right)\right\Vert Po\left(\lambda\right)\right)\geq\frac{1}{2}\left(\frac{K+n-\lambda-1}{N-1}\right)^{2}.
\]
\end{theorem}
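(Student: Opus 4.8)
The plan is to exploit the general principle that divergence from a Poisson distribution is controlled from below by how far the generating function (or equivalently the factorial moments) of the true distribution deviates from the Poisson one. Concretely, for a distribution $P$ on the non-negative integers with mean $\lambda$, one has the exact identity (a consequence of the log-sum / Gibbs inequality together with a judicious choice of test exponential tilting) $D(P\Vert Po(\lambda)) \geq \sup_{t} \bigl( t\,\mathbb{E}_P[X] - \log \mathbb{E}_{Po(\lambda)}[e^{tX}] - (\text{correction})\bigr)$, but a cleaner route here is to use the fact that $D(P\Vert Po(\lambda))$ is bounded below in terms of the second factorial moment. I would first record the second factorial moment of $hyp(N,K,n)$, namely $\mathbb{E}\bigl[X(X-1)\bigr] = \frac{n(n-1)K(K-1)}{N(N-1)}$, and compare it with the Poisson value $\lambda^2 = \frac{n^2K^2}{N^2}$; the normalized discrepancy is exactly what produces the factor $\frac{K+n-\lambda-1}{N-1}$ after simplification.

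First I would invoke a Pinsker-type / Hammersley-Chapman-Robbins style bound: for any function $f$ with $\mathbb{E}_{Po(\lambda)}[f] = 0$, $D(P\Vert Po(\lambda)) \geq \tfrac12 \bigl(\mathbb{E}_P[f]\bigr)^2 / \mathrm{Var}_{Po(\lambda)}[f]$ is \emph{not} quite right in general, so instead I would use the sharper fact that the divergence dominates $\tfrac12$ times the squared $L^2(Po(\lambda))$-distance is also false; the correct elementary tool is: if $P$ and $Po(\lambda)$ have the same mean, then by choosing the linear-in-$X$ ``score'' one gets nothing, so one must go to the quadratic score. The key step is the inequality $D(P\Vert Q) \geq \tfrac12 \sup_{g}\bigl(\mathbb{E}_P[g] - \mathbb{E}_Q[g]\bigr)^2 / \mathrm{Var}_Q[g]$ taken over $g$ in a suitable class, applied with $g(x) = x(x-1) - $ (its $Q$-mean); for $Q = Po(\lambda)$ one has $\mathbb{E}_Q[X(X-1)] = \lambda^2$ and $\mathrm{Var}_Q[X(X-1)] = 4\lambda^3 + 2\lambda^2$. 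Plugging in $\mathbb{E}_P[X(X-1)] - \lambda^2$ for the hypergeometric and simplifying the algebra should collapse to $\tfrac12\bigl(\frac{K+n-\lambda-1}{N-1}\bigr)^2$ exactly, since the numerator $\lambda^2 - \mathbb{E}_P[X(X-1)]$ factors nicely once one substitutes $\lambda = nK/N$.

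The main obstacle I anticipate is two-fold. First, pinning down the \emph{exact} constant $\tfrac12$ rather than a weaker constant: the crude Hammersley–Chapman–Robrobins bound typically loses a factor, so I expect the authors actually use a more refined argument — perhaps a direct convexity estimate on $t \mapsto D(P\Vert Po(\lambda)) $ viewed through the exponential family through $Po(\lambda)$, or an application of the chain rule for divergence that isolates the ``pair correlation'' contribution of sampling without replacement. Second, the algebraic simplification: one must verify that $\lambda^2 - \frac{n(n-1)K(K-1)}{N(N-1)}$, after dividing by the appropriate variance-like normalizer, yields precisely $\bigl(\frac{K+n-\lambda-1}{N-1}\bigr)^2$ and not merely something asymptotically equivalent. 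I would carry this out by writing everything over the common denominator $N^2(N-1)$, expanding, and checking the factorization $N(N-1)\lambda^2 - n(n-1)K(K-1) = \lambda^2(N-1) + \lambda(\text{something}) = \cdots$; I expect it to reorganize as $\lambda\,(K+n-\lambda-1)\,\cdot\,\text{(const)}/(N-1)$, which combined with the $\tfrac12$ and the squaring gives the claim. If the clean constant resists the moment approach, the fallback is to lower-bound $D$ by restricting to a two-point partition $\{X = 0\}$ versus $\{X \geq 1\}$, or by a log-convexity argument in $N$, but I believe the factorial-moment route is the intended one.
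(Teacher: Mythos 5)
You have correctly identified the right quantity to track --- the deficit of the second (factorial) moment of the hypergeometric relative to the Poisson, which is exactly $\lambda-\mathrm{Var}(X)$ since the means agree --- and your anticipated algebra does work out: $1-\frac{\mathrm{Var}(X)}{\lambda}=1-\frac{(N-n)(N-K)}{N(N-1)}=\frac{K+n-\lambda-1}{N-1}$. But there is a genuine gap in how you propose to convert this moment deficit into a divergence bound. The variational/Hammersley--Chapman--Robbins route you sketch, with the quadratic score $g(x)=x(x-1)-\lambda^{2}$ normalized by $\mathrm{Var}_{Po(\lambda)}[X(X-1)]=4\lambda^{3}+2\lambda^{2}$, yields at best $\frac{(\lambda-\mathrm{Var}(X))^{2}}{2(4\lambda^{3}+2\lambda^{2})}$, which is strictly weaker than the claimed $\frac{1}{2}\bigl(\frac{\lambda-\mathrm{Var}(X)}{\lambda}\bigr)^{2}$ because the correct normalizer is $\lambda^{2}$, not $4\lambda^{3}+2\lambda^{2}$. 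You acknowledge this yourself (``the crude bound typically loses a factor''), but the refinement you hope for is precisely the missing ingredient: the paper invokes a specific result of Harremo{\"e}s, Johnson and Kontoyiannis stating that if $E[X]=\lambda$ and $\mathrm{Var}(X)\leq\lambda$ then $D(X\Vert Po(\lambda))\geq\frac{1}{2}\bigl(1-\frac{\mathrm{Var}(X)}{\lambda}\bigr)^{2}$. Without that theorem (or an equivalent argument establishing the constant with normalizer $\lambda$), your proof does not close; none of your fallbacks (two-point partition, log-convexity in $N$) plausibly recovers the exact constant either.

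A second, smaller omission: the paper treats the degenerate case $n=K=N$ separately. There the hypergeometric is a point mass at $N$, the claimed bound equals $\frac{1}{2}$, and the paper verifies it by a direct Stirling computation giving $D\geq\frac{1}{2}\ln(2\pi)>\frac{1}{2}$, before applying the moment-deficit argument to the remaining cases. Your write-up does not address this case, and any argument relying on a nondegenerate tilting or on strict inequalities would need it handled explicitly.
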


\begin{proof}
If $n=K=N$ then $\lambda=N$ and the inequality states that
\[
D\left(\left.hyp\left(N,N,N\right)\right\Vert Po\left(N\right)\right)\geq\frac{1}{2}.
\]
In this case the hypergeometric distribution attains the value $N$
with probability 1 and the divergence has value
\begin{dmath}
-\ln\left(\frac{N^{N}}{N!}\exp\left(-N\right)\right)  \geq-\ln\left(\frac{N^{N}}{\tau^{\nicefrac{1}{2}}N^{N+\nicefrac{1}{2}}\exp\left(-N\right)}\exp\left(-N\right)\right).\\
  =\frac{1}{2}\ln\left(\tau\right)+\frac{1}{2}\ln\left(N\right)\\
  \geq\frac{1}{2}\ln\left(\tau\right).
\end{dmath}
Here we have used the lower bound in the Stirling approximation and
used $\tau$ as short for $2\pi.$ In this special case the result
follows because $\tau>\mathrm{e}.$

Therefore we may assume that $n<N$ or $K<N.$ Harremo{\" e}s, Johnson
and Kontoyannis \cite{Harremoes2016b} have proved that if a random
variable $X$ satisfies $E\left[X\right]=\lambda$ and $Var$$\left(X\right)\leq\lambda$
then 
\[
D\left(\left.X\right\Vert Po\left(\lambda\right)\right)\geq\frac{1}{2}\left(1-\frac{Var\left(X\right)}{\lambda}\right)^{2}.
\]
The variance of the hypergeometric distribution satisfies 
\begin{dmath}
\frac{nK\left(N-n\right)\left(N-K\right)}{N^{2}\left(N-1\right)}  =\lambda\frac{\left(N-n\right)\left(N-K\right)}{N\left(N-1\right)}\\
  \leq\lambda.
\end{dmath}
Now we get 
\begin{dmath}
D\left(\left.hyp\left(N,K,n\right)\right\Vert Po\left(\lambda\right)\right)  \geq\frac{1}{2}\left(1-\frac{\left(N-n\right)\left(N-K\right)}{N\left(N-1\right)}\right)^{2}
  =\frac{1}{2}\left(\frac{K+n-\lambda-1}{N-1}\right)^{2}.
\end{dmath}
\end{proof}
The lower bound can be rewritten as 
\[
D\left(\left.hyp\left(N,K,n\right)\right\Vert Po\left(\lambda\right)\right)\geq\frac{1}{2}\left(\frac{\frac{\lambda}{n}+\frac{\lambda}{K}-\frac{\lambda+1}{N}}{1-\frac{1}{N}}\right)^{2}.
\]
For a sequence of approximations with a fixed value of $\lambda,$
the lower bound will tend to zero if and only if both $n$ and $K$
tend to infinity. If only one of the parameters $n$ and $K$ tends
to infinity and the other is bounded or perhaps even constant, then
one would approximate the hypergeometric distribution by a binomial
distribution instead. 

\section{Lower bound for a binomial approximation }

One may compare sampling without replacement by sampling with replacement.
For parameters $N,K$ and $n$ it means that one may compare the hypergeometric
distribution $hyp(N,K,n)$ with the binomial distribution $bin(n,p)$
with $p=\nicefrac{K}{N}$. One can use the same technique as developed
in \cite{Harremoes2016b} to obtain a lower bound on information divergence.
This technique uses orthogonal polynomials. The \emph{Kravchuk polynomials}
are orthogonal polynomials with respect to the binomial distribution
$bin\left(n,p\right)$ and are given by
\[
\mathcal{K}_{k}\left(x;n\right)=\sum_{j=0}^{k}\left(-1\right)^{j}\left(\frac{p}{1-p}\right)^{k-j}\binom{x}{j}\binom{n-x}{k-j}\,.
\]

\begin{remark}
Often the parameter $q=\frac{1}{1-p}$ is used to parametrize the
Kravchuk polynomials, but we will not use this notion.
\end{remark}

The Kravchuk polynomials satisfy 
\begin{equation}
\sum_{i=0}^{n}\binom{n}{i}p^{i}\left(1-p\right)^{n-i}\mathcal{K}_{r}\left(x;n\right)\mathcal{K}_{s}\left(x;n\right)=\left(\frac{p}{1-p}\right)^{r}\binom{n}{r}\delta_{r,s}\,.\label{eq:ortogonal}
\end{equation}
The first three Kravchuk polynomials are 
\begin{dgroup*}\begin{dmath*}\mathcal{K}_{0}\left(x;n\right)  =1\,,\end{dmath*}
\begin{dmath*}\mathcal{K}_{1}\left(x;n\right)  =\frac{np-x}{1-p}\,,\end{dmath*}
\begin{dmath*}\mathcal{K}_{2}\left(x;n\right)  =\frac{\left(2p-1\right)\left(x-np\right)+\left(x-np\right)^{2}-np\left(1-p\right)}{2\left(1-p\right)^{2}}.
\end{dmath*}
\end{dgroup*}
For a random variable $X$ with mean value $np$ one has 
\[
E\left[\mathcal{K}_{2}\left(X;n\right)\right]=\frac{Var\left(X\right)-np\left(1-p\right)}{2\left(1-p\right)^{2}}
\]
so the second Kravchuk moment measures how much a random variable
with mean $np$ deviates from having variance $np\left(1-p\right)$.
We need to calculate moments of the Kravchuk polynomials with respect
to a binomial distribution. Let $X$ denote a binomial random variable
with distribution $bin(n,p).$ : The first moment is easy 
\[
E\left[\mathcal{K}_{1}\left(X;n\right)\right]=0.
\]
The second moment can be calculated from Equation (\ref{eq:ortogonal})
and is 
\[
E\left[\mathcal{K}_{2}\left(X;n\right)\right]=\frac{p^{2}}{\left(1-p\right)^{2}}\binom{n}{2}\,.
\]
The \emph{normalized Kravchuk polynomial} of order 2 is 
\begin{dmath*}
\tilde{\mathcal{K}}_{2}\left(x;n\right)  =\frac{\frac{\left(2p-1\right)\left(x-np\right)+\left(x-np\right)^{2}-np\left(1-p\right)}{2\left(1-p\right)^{2}}}{\left(\frac{p^{2}}{\left(1-p\right)^{2}}\binom{n}{2}\right)^{\nicefrac{1}{2}}}
  =\frac{\frac{2p-1}{np\left(1-p\right)}\left(x-np\right)+\frac{\left(x-np\right)^{2}}{np\left(1-p\right)}-1}{\left(2\frac{n-1}{n}\right)^{\nicefrac{1}{2}}}.
\end{dmath*}
The minimum of the normalized Kratchuk polynomial is
\begin{equation}
-\frac{\frac{\left(\frac{1}{2}-p\right)^{2}}{np\left(1-p\right)}+1}{\left(2\frac{n-1}{n}\right)^{\nicefrac{1}{2}}}.\label{eq:min}
\end{equation}

If $X$ is a hypergeometric random variable then
\begin{dmath*}
E\left[\tilde{\mathcal{K}}_{2}\left(X;n\right)\right]  =\frac{\frac{n\frac{K}{N}\frac{N-K}{N}\cdot\frac{N-n}{N-1}}{np\left(1-p\right)}-1}{\left(2\frac{n-1}{n}\right)^{\nicefrac{1}{2}}}
  =\frac{\frac{N-n}{N-1}-1}{\left(2\frac{n-1}{n}\right)^{\nicefrac{1}{2}}}
  =\textrm{-}\frac{\left(n\left(n-1\right)\right)^{\nicefrac{1}{2}}}{2^{\nicefrac{1}{2}}\left(N-1\right)}.
\end{dmath*}
We note that $E\left[\tilde{\mathcal{K}}_{2}\left(X;n\right)\right]\geq\textrm{-}2^{\textrm{-}\nicefrac{1}{2}}$
as long as $n<N.$ 

For any (positive) discrete measures $P$ and $Q$ information divergence
is defined as
\[
D\left(P\left\Vert Q\right.\right)=\sum_{i}p_{i}\ln\frac{p_{i}}{q_{i}}-p_{i}+q_{i}.
\]
For a fixed measure $Q$ the measure that minimizes the $D\left(P\left\Vert Q\right.\right)$
under a linear constraint $\sum x_{i}\cdot p_{i}=m$ is the measure
$Q_{\beta}$ with point masses $\exp\left(\beta\cdot x_{i}\right)\cdot q_{i}$
for which 
\[
\sum_{i}x_{i}\exp\left(\beta\cdot x_{i}\right)\cdot q_{i}=m.
\]
We introduce the moment generating function $M\left(\beta\right)=\sum_{i}\exp\left(\beta\cdot x_{i}\right)\cdot q_{i}$
and observe that 
\[
M^{\left(k\right)}\left(\beta\right)=\sum_{i}x_{i}^{k}\exp\left(\beta\cdot x_{i}\right)\cdot q_{i}.
\]

\begin{theorem}
\label{thm:epsilon}For any binomial distribution $bin(n;p)$ there
exists an $\epsilon>0$ such that for any measure $P$ and with $E_{P}\left[\tilde{\mathcal{K}}_{2}\left(X;n\right)\right]\in\left]\textrm{-}\epsilon,0\right]$
one has 
\[
D\left(P\left\Vert bin\left(n,p\right)\right.\right)\geq\frac{\left(E_{P}\left[\tilde{\mathcal{K}}_{2}\left(X;n\right)\right]\right)^{2}}{2}
\]
where 
\[
E_{P}\left[\tilde{\mathcal{K}}_{2}\left(X;n\right)\right]=\sum_{x=0}^{n}\tilde{\mathcal{K}}_{2}\left(x;n\right)P\left(x\right).
\]
\end{theorem}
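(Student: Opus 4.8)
The plan is to use the variational characterization of divergence minimizers under a linear constraint that was set up just before the statement. Fix the binomial distribution $Q=bin(n,p)$ and consider the linear functional $x\mapsto\tilde{\mathcal{K}}_{2}(x;n)$. For a target value $m\in\left]\textrm{-}\epsilon,0\right]$ of this functional, the measure $P$ minimizing $D(P\Vert Q)$ subject to $E_{P}[\tilde{\mathcal{K}}_{2}(X;n)]=m$ is the tilted measure $Q_{\beta}$ with point masses $\exp(\beta\,\tilde{\mathcal{K}}_{2}(x;n))\,q_x$, where $\beta=\beta(m)$ is chosen so that the constraint holds. Since divergence to $Q$ can only increase when we drop to a one-parameter subproblem, it suffices to lower bound $D(Q_{\beta(m)}\Vert Q)$ by $m^2/2$ for all $m$ in a right-neighbourhood of $0$.

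Next I would reduce this to a statement about the cumulant generating function. Writing $M(\beta)=\sum_x \exp(\beta\,\tilde{\mathcal{K}}_{2}(x;n))\,q_x$ and $\Lambda(\beta)=\ln M(\beta)$, the constraint is $m=\Lambda'(\beta)$ and a short computation gives $D(Q_{\beta}\Vert Q)=\beta\Lambda'(\beta)-\Lambda(\beta)$, the Legendre transform of $\Lambda$ evaluated at $m$. Because $\tilde{\mathcal{K}}_{2}$ is normalized with respect to $Q$, we have $\Lambda(0)=0$, $\Lambda'(0)=E_Q[\tilde{\mathcal{K}}_{2}(X;n)]=0$ (as $\tilde{\mathcal{K}}_{2}$ is orthogonal to $\mathcal{K}_{0}=1$), and $\Lambda''(0)=\mathrm{Var}_Q(\tilde{\mathcal{K}}_{2}(X;n))=1$ by the normalization. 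A standard fact about Legendre transforms then gives the expansion $D(Q_{\beta(m)}\Vert Q)=\tfrac12 m^{2}+O(m^{3})$ as $m\to0$; the task is to upgrade this to the clean one-sided inequality $D\ge \tfrac12 m^{2}$ for $m\in\left]\textrm{-}\epsilon,0\right]$ with some $\epsilon>0$ depending only on $(n,p)$.

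For the last step I would argue as follows. Let $g(m)=D(Q_{\beta(m)}\Vert Q)-\tfrac12 m^{2}$. Then $g(0)=0$, $g'(0)=0$, and by differentiating the Legendre-transform identity one finds $g''(m)=\dfrac{1}{\Lambda''(\beta(m))}-1$, which vanishes at $m=0$ and is continuous in $m$. The key observation is the sign: since $\Lambda$ is the CGF of a bounded random variable ($\tilde{\mathcal{K}}_{2}$ takes finitely many values on $\{0,\dots,n\}$, with minimum given by \eqref{eq:min}), $\Lambda''$ is real-analytic, and one can check that for $\beta<0$ — which corresponds to $m<0$ — the tilted variance $\Lambda''(\beta)$ is at most $1$, i.e.\ $g''(m)\ge 0$ on a left-neighbourhood of $0$; then integrating twice from $0$ gives $g(m)\ge 0$ there, which is exactly the claim after renaming the neighbourhood radius $\epsilon$.

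The main obstacle is the sign control of $g''$, equivalently showing $\Lambda''(\beta)\le 1$ for small $\beta<0$. This is not automatic — it is where the restriction to the one-sided interval $\left]\textrm{-}\epsilon,0\right]$ and to this particular (second Kravchuk) functional is used, and it is presumably the reason the theorem is stated with an unspecified $\epsilon$ rather than a global constant. I would handle it by a direct estimate on $\Lambda'''(0)=E_Q[\tilde{\mathcal{K}}_{2}(X;n)^{3}]$ (the third normalized Kravchuk moment): if this third cumulant is nonnegative then $\Lambda''$ is decreasing through $0$ from the left, giving $g''\ge0$ on $\left]\textrm{-}\epsilon,0\right]$ by continuity; if a sign computation is unavailable one falls back on continuity of $g''$ at $0$ together with $g''(0)=0$ and a more careful bookkeeping of the remainder, shrinking $\epsilon$ as needed. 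Either way the argument is local and purely about the fixed binomial reference measure, so the constant $\epsilon$ legitimately depends only on $n$ and $p$.
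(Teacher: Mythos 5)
Your strategy is in essence the paper's own: tilt the binomial exponentially along $\tilde{\mathcal{K}}_{2}$, reduce the claimed inequality to showing that the second derivative of the (log-)moment generating function does not exceed $1$ on a left-neighbourhood of $\beta=0$, and deduce that from the positivity of the third Kravchuk moment at $\beta=0$. The only structural difference is that you tilt and normalize, working with $\Lambda=\ln M$ and the Legendre transform $\beta\Lambda'(\beta)-\Lambda(\beta)$, whereas the paper keeps the tilted measure unnormalized and uses the extended divergence $\sum_{i}p_{i}\ln\frac{p_{i}}{q_{i}}-p_{i}+q_{i}$, so its sufficient condition is $M''(\beta)\leq1$ rather than $\Lambda''(\beta)\leq1$; since $E[\tilde{\mathcal{K}}_{2}]=0$ and $E[\tilde{\mathcal{K}}_{2}^{2}]=1$ under the binomial, both conditions reduce at $\beta=0$ to the same third-moment criterion. (A side effect of normalizing is that you only cover probability measures $P$, while the theorem as stated, and the paper's setup, allow arbitrary positive measures; this is harmless for the intended application to hypergeometric distributions but is a genuine weakening of the statement.)

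The genuine gap is that you never establish the one fact on which everything hinges: the strict positivity of $E\left[\tilde{\mathcal{K}}_{2}\left(X;n\right)^{3}\right]$ for $X\sim bin(n,p)$. This is not a routine remark but the substantive half of the paper's proof: it expands $\left(2\left(1-p\right)^{2}\mathcal{K}_{2}\right)^{3}$ in powers of $x-np$, inserts the first six central moments of the binomial, and arrives at
\[
E\left[\left(2\left(1-p\right)^{2}\mathcal{K}_{2}\right)^{3}\right]=np^{2}\left(1-p\right)^{2}\left(8n-2+p\left(1-p\right)\left(89n^{2}-293n+174\right)\right),
\]
which is positive for $n>2$ and, for $n=2$, positive except at $p=\nicefrac{1}{2}$ where it vanishes. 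Two further problems with how you propose to close this step. First, nonnegativity of the third cumulant is not enough: if it vanishes, the sign of $\Lambda''(\beta)-1$ for small $\beta<0$ is governed by the fourth cumulant and can go either way, so the borderline case must be excluded or handled by a separate argument, exactly as the paper is forced to flag $n=2$, $p=\nicefrac{1}{2}$. Second, your fallback --- continuity of $g''$ at $0$ together with $g''(0)=0$ and ``more careful bookkeeping'' --- is not an argument at all: a continuous function that vanishes at a point need not keep a sign on any punctured neighbourhood, and here the sign of $g''$ near $0$ is precisely the issue in dispute. So the proposal has the right skeleton but omits the decisive computation, and the proposed escape hatch cannot replace it.
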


\begin{proof}
Let $M$ denote the moment generating function 
\[
M\left(\beta\right)=\sum_{x=0}^{n}\exp\left(\beta\cdot\tilde{\mathcal{K}}_{2}\left(x;n\right)\right)bin\left(n,p,x\right)
\]
and let $Q_{\beta}$ denote the measure with 
\[
Q_{\beta}\left(x\right)=\exp\left(\beta\cdot\tilde{\mathcal{K}}_{2}\left(x;n\right)\right)bin\left(n,p,x\right)
\]
and $M'\left(\beta\right)=\mu.$ We have 
\begin{dmath*}
D\left(P\left\Vert bin\left(n,p\right)\right.\right)  \geq D\left(Q_{\beta}\left\Vert bin\left(n,p\right)\right.\right)=\beta\cdot\mu-\left(M\left(\beta\right)-1\right)
\end{dmath*}
so we want to prove that 
\[
\beta\cdot\mu-\left(M\left(\beta\right)-1\right)\geq\frac{1}{2}\mu^{2}.
\]
This inequality holds for $\mu=0$ so we differentiate with respect
to $\beta$ and see that the it is sufficient to prove that 
\begin{dmath*}
\mu+\beta\cdot\frac{\mathrm{d}\mu}{\mathrm{d}\beta}-M'\left(\beta\right)  \leq\mu\cdot\frac{\mathrm{d}\mu}{\mathrm{d}\beta},
\beta\cdot\frac{\mathrm{d}\mu}{\mathrm{d}\beta}  \leq\mu\cdot\frac{\mathrm{d}\mu}{\mathrm{d}\beta},
\beta  \leq\mu.
\end{dmath*}
We differentiate once more with respect to $\beta$  and see that it is
sufficient to prove that
\[
1\geq\frac{\mathrm{d}\mu}{\mathrm{d}\beta}.
\]
Now $\frac{\mathrm{d}\mu}{\mathrm{d}\beta}=M''\left(\beta\right).$
Since $M''\left(0\right)=1$ it is sufficient to prove that 
\[
M^{\left(3\right)}\left(0\right)=E\left[\left(\tilde{\mathcal{K}}_{2}\left(X;n\right)\right)^{3}\right]>0
\]
if $X$ is binomial $bin\left(n,p\right).$ 

Up to a positive factor the third moment of the Kravchuk polynomial
is given by
\begin{dmath*}
\left(2\left(1-p\right)^{2}\mathcal{K}_{2}\left(n;x\right)\right)^{3}=  \textrm{-}n^{3}p^{3}\left(1-p\right)^{3}+3n^{2}p^{2}\left(1-p\right)^{2}\left(2p-1\right)\left(x-np\right)\\
  +np\left(1-p\right)\left(4np\left(1-p\right)-3\left(2p-1\right)^{2}\right)\left(x-np\right)^{2}\\
  +\left(2p-1\right)^{2}\left(\left(2p-1\right)+2np\right)\left(x-np\right)^{3}\\
  +3\left(\left(2p-1\right)^{2}-np\left(1-p\right)\right)\left(x-np\right)^{4}\\
  +3\left(2p-1\right)\left(x-np\right)^{5}+\left(x-np\right)^{6}.
\end{dmath*}
 Using the values of the first six central moments of the binomial
distribution we get 
\begin{dmath*}
E\left[\left(2\left(1-p\right)^{2}\mathcal{K}_{2}\left(n;x\right)\right)^{3}\right]
=np^{2}\left(1-p\right)^{2}\left(8n-2+p\left(1-p\right)\left(89n^{2}-293n+174\right)\right).
\end{dmath*}
If $n>2$ we have $89n^{2}-293n+174>0$ so the whole expression becomes
positive. For $n=2$ the last factor equals $14-56p\left(1-p\right),$which
is positive except for $p=\nicefrac{1}{2}$ where it equals zero.
\end{proof}
For the hypergeometric distributions one gets the lower bound 
\begin{dmath}
D\left(hyp\left(N,K,n\right)\left\Vert bin\left(n,\frac{K}{N}\right)\right.\right)  \geq\frac{\left(\textrm{-}\frac{\left(n\left(n-1\right)\right)^{\nicefrac{1}{2}}}{2^{\nicefrac{1}{2}}\left(N-1\right)}\right)^{2}}{2}\nonumber 
  =\frac{n\left(n-1\right)}{4\left(N-1\right)^{2}}\,.\label{eq:1dimHybLower}
\end{dmath}
According to Theorem \ref{thm:epsilon} this inequality holds if $N$ is sufficiently
large, but later (Theorem \ref{thm:kvadratiskLower}) we shall see
that this lower bound (\ref{eq:1dimHybLower}) holds for hypergeometric distribution for any value of $N.$
\begin{theorem}
\label{thm:Lower1dim}Assume that the parameters of the binomial distribution
are such that $np$ is an integer. Let $X$ denote a random
variable such that 
\[
\textrm{-}2^{\textrm{-}\nicefrac{1}{2}}\leq E\left[\mathcal{K}_{2}\left(n,X\right)\right]\leq0.
\]
Then 
\begin{equation}
D\left(P\left\Vert bin\left(n,p\right)\right.\right)\geq\frac{\left(E\left[\tilde{\mathcal{K}}_{2}\left(X;n\right)\right]\right)^{2}}{2}.\label{eq:3}
\end{equation}
where $P$ denotes the distribution of $X.$
\end{theorem}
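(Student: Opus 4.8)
The plan is to re-run the exponential-tilting scheme from the proof of Theorem~\ref{thm:epsilon}, but with the sharper Gibbs form of the variational bound (the weaker ``positive-measure'' version used there is genuinely not enough here: already for $bin(2,\tfrac12)$ it only produces a constant strictly below $\tfrac12m^{2}$), and to use the hypothesis $np\in\mathbb Z$ to reduce the whole problem to a single scalar inequality. Write $Y=\tilde{\mathcal K}_2(X;n)$ and $\Lambda(\beta)=\ln E_{bin(n,p)}[e^{\beta Y}]$, and for $\beta\in\mathbb R$ let $P_\beta(x)\propto e^{\beta Y(x)}bin(n,p,x)$. Since $D(P\|bin(n,p))-D(P\|P_\beta)=\beta E_P[Y]-\Lambda(\beta)$ and $D(P\|P_\beta)\ge0$,
\[
D\bigl(P\|bin(n,p)\bigr)\ \ge\ \beta\,E_P[Y]-\Lambda(\beta)\qquad(\beta\in\mathbb R).
\]
With $m:=E_P[Y]\in[-2^{-1/2},0]$, the choice $\beta=m$ gives $D(P\|bin(n,p))\ge m^{2}-\Lambda(m)$, so it suffices to prove
\[
\Lambda(\beta)\ \le\ \tfrac12\beta^{2}\qquad\text{for all }\beta\in[-2^{-1/2},0].
\]

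Because $\Lambda(0)=0$, $\Lambda'(0)=E_{bin(n,p)}[Y]=0$ and $\Lambda''(0)=\mathrm{Var}_{bin(n,p)}(Y)=1$, the function $g(\beta)=\tfrac12\beta^{2}-\Lambda(\beta)$ satisfies $g(0)=g'(0)=0$ with $g''=1-\Lambda''$; hence on any interval on which $\Lambda''\le1$ the function $g$ is convex, so it stays above its tangent at $0$, i.e. $g\ge0$. This reduces matters to showing $\Lambda''(\beta)\le1$ for $\beta\in[-2^{-1/2},0]$, that is, that the variance of $Y$ under the tilted law $P_\beta$ never exceeds its value $1$ under $bin(n,p)$.

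To handle $\Lambda''$ I would use integrality through the factorisation $\tilde{\mathcal K}_2(x;n)=(V(x)-1)/c$ with $c=(2(n-1)/n)^{1/2}$ and
\[
V(x)=\left(\frac{x-np+p-\tfrac12}{\sqrt{np(1-p)}}\right)^{\!2}-\frac{(p-\tfrac12)^{2}}{np(1-p)} .
\]
When $np\in\mathbb Z$ the minimiser of $V$ over the integer support is $x=np$, so $V\ge0$, $\tilde{\mathcal K}_2\ge-1/c$, and $2^{-1/2}=(1/c)\sqrt{(n-1)/n}$ is precisely the associated critical level. Writing $\gamma=\beta/c\le0$, the law $P_\beta$ is simply $bin(n,p)$ tilted by $e^{\gamma V}$, and $\Lambda''(\beta)=c^{-2}\mathrm{Var}_{P_\beta}(V)$; since $\mathrm{Var}_{bin(n,p)}(V)=c^{2}$ (the identity $\mathrm{Var}_{bin(n,p)}(Y)=1$ rewritten), the goal becomes
\[
\mathrm{Var}_{P_\beta}(V)\ \le\ \mathrm{Var}_{bin(n,p)}(V)\qquad\text{for }\gamma\in\left[-\tfrac12\bigl(n/(n-1)\bigr)^{1/2},\,0\right].
\]

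The main obstacle is this variance inequality: a downward exponential tilt of a nonnegative variable need not decrease its variance, so one has to use the specific shape of the distribution of $V$. I would prove that $\gamma\mapsto\mathrm{Var}_{P_\gamma}(V)$ is nondecreasing on the relevant interval; its derivative is the third central moment of $V$ under $P_\gamma$, so this comes down to the nonnegativity of that moment, which at $\gamma=0$ is exactly the positivity $E_{bin(n,p)}[(\tilde{\mathcal K}_2)^{3}]>0$ established in Theorem~\ref{thm:epsilon} (valid for $n>2$, and for $n=2$ with $p\neq\tfrac12$). The work is to propagate this sign computation to all tilts with $\gamma$ in the stated range, the threshold $2^{-1/2}$ being calibrated so that the tilt stays mild enough. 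For $n=2$ the claim can instead be checked directly on the three-point support of $bin(2,p)$ — when $p=\tfrac12$ one gets $\Lambda(\beta)=\ln\cosh\beta$ and $\Lambda''(\beta)=1/\cosh^{2}\beta\le1$ at once. As a final check, letting $n\to\infty$ the variable $V$ tends to a $\chi^{2}_{1}$ distribution, for which $\mathrm{Var}_{P_\gamma}(V)/\mathrm{Var}_{bin(n,p)}(V)=(1-2\gamma)^{-2}\le1$ for every $\gamma\le0$, so the inequality holds there comfortably.
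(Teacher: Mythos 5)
Your reduction is clean, and your framing via the normalized tilt is a genuine (and sharper) alternative to the paper's device: the Donsker--Varadhan step $D(P\Vert bin(n,p))\ge\beta m-\Lambda(\beta)$ with the choice $\beta=m$, together with $\Lambda(0)=\Lambda'(0)=0$ and $\Lambda''(0)=1$, correctly reduces everything to $\Lambda''(\beta)\le1$ on $[-2^{-1/2},0]$. Your side remark is also right and worth recording: the paper minimizes over \emph{positive} measures, and for $bin(2,\tfrac12)$ the resulting bound $\beta\mu-(M(\beta)-1)$ with $M'(\beta)=\mu$ equals $\mu\,\mathrm{arcsinh}\,\mu-\sqrt{1+\mu^{2}}+1=\tfrac12\mu^{2}-\tfrac{1}{24}\mu^{4}+\dots$, which is strictly below $\tfrac12\mu^{2}$; this is precisely why the paper must fall back on a finite numerical check for $n\le13$. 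For comparison, the paper works with $M''(\beta)=E\bigl[Y^{2}e^{\beta Y}\bigr]$, which is convex in $\beta$, so it suffices to check $M''\le1$ at the two endpoints $\beta=0$ and $\beta_{0}=-\nicefrac{2}{\mathrm{e}}$; at $\beta_{0}$ it uses the pointwise bound $y^{2}e^{\beta_{0}y}\le1$ for $y\ge\beta_{0}$ together with the fact (this is where $np\in\mathbb{Z}$ enters) that $\min_{x}\tilde{\mathcal{K}}_{2}(x;n)=-\left(\nicefrac{n}{2(n-1)}\right)^{\nicefrac{1}{2}}\ge\beta_{0}$ once $n\ge14$.

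However, your argument has a genuine gap exactly where the theorem is hard: you never prove $\Lambda''(\beta)\le1$ on $[-2^{-1/2},0]$. You propose to show that $\beta\mapsto\mathrm{Var}_{P_{\beta}}(Y)$ is nondecreasing by showing that its derivative, the third central moment of $Y$ under $P_{\beta}$, is nonnegative; but you verify the sign only at $\beta=0$ (where it is the third moment computed in the proof of Theorem~\ref{thm:epsilon}) and explicitly defer ``propagating this sign to all tilts in the stated range.'' That propagation is the entire content of the step: the sign of a cumulant at one parameter value does not persist under exponential tilting in general, and none of the apparatus you set up (the factorization through $V$, the calibration of the threshold $2^{-1/2}$) is actually used to carry it out. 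Unlike $M''$, the function $\Lambda''$ is not obviously convex in $\beta$, so the paper's endpoint-checking trick does not transfer to your setting; and you offer no substitute for the paper's finite verification for small $n$ beyond the single case $n=2$, $p=\tfrac12$. As it stands, the proposal is a promising reduction of the theorem to an unproved variance-monotonicity claim, not a proof.
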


\begin{proof}
As in the proof of Theorem \ref{thm:epsilon} it is sufficient to
prove that 
\[
M''\left(\beta\right)\leq1.
\]
The function
\[
\beta\to M''\left(\beta\right)=\sum_{x=0}^{n}\tilde{\mathcal{K}}_{2}\left(n;x\right)^{2}\exp\left(\beta\tilde{\mathcal{K}}_{2}\left(n;x\right)\right)bin\left(n,p,x\right)
\]
is convex in $\beta$, so if we prove the inequality $M''\left(\beta\right)\leq1$
for $\beta=0$ and for $\beta=\beta_{0}<0$ then the inequality holds
for any $\beta\in\left[\beta_{0},0\right].$ Let $\beta_{0}$ denote
the constant $\textrm{-}\nicefrac{2}{\mathrm{e}}\,.$ We observe that
$\beta_{0}$ is slightly less than $\textrm{-}2^{\textrm{-}\nicefrac{1}{2}}.$ 

Consider the function $f\left(x\right)=x^{2}\exp\left(\beta_{0}x\right)$
with 
\[
f'\left(x\right)=\left(2+\beta_{0}x\right)x\exp\left(\beta_{0}x\right)\,.
\]
The function $f$ is decreasing for $x\leq0$, it has minimum 0 for
$x=0$, it is increasing for $0\leq x\leq-\nicefrac{2}{\beta_{0}=\mathrm{e}}$,
it has local maximum 1 for $x=\mathrm{e},$ and it is decreasing for
$x\geq\mathrm{e}.$ We have $f\left(\beta_{0}\right)=\frac{4}{\exp2}\exp\left(\frac{4}{\exp2}\right)<1.$
Hence $f\left(x\right)\leq1$ for $x\geq\beta_{0}.$

The graph of $x\to\tilde{\mathcal{K}}_{2}\left(n;x\right)$ is a parabola.
We note that 
\[
\frac{\mathrm{d}}{\mathrm{d}x}\mathcal{K}_{2}\left(x;n\right)=\frac{p-\frac{1}{2}+x-np}{\left(1-p\right)^{2}}
\]
so as a function with real domain there is a stationary point at 
\[
x=\left(n-1\right)p+\frac{1}{2}\,.
\]
Since a binomial distribution can only take integer values the minimum
is attained for the integer in the interval $\left[\left(n-1\right)p,\left(n-1\right)p+1\right],$
but the integer $np$ is the only integer in this interval. Therefore
for $x\in\mathbb{Z}$ the minimum of $\tilde{\mathcal{K}}_{2}\left(n;x\right)$
is 
\[
\tilde{\mathcal{K}}_{2}\left(n;np\right)=\frac{\textrm{-}1}{\left(2\frac{n-1}{n}\right)^{\nicefrac{1}{2}}}
\]
so the inequality holds as long as
\[
\beta_{0}\leq\frac{\textrm{-}1}{\left(2\frac{n-1}{n}\right)^{\nicefrac{1}{2}}}\,.
\]
We isolate $n$ in this inequality and get
\[
n\geq\frac{1}{1-\frac{1}{2\beta_{0}^{2}}}=\frac{8}{8-\mathrm{e}^{2}}=13.0945>13.
\]
If $n\leq13$ and $np$ is an integer then there are only 91 cases
and in each of these cases we can numerically check Inequality (\ref{eq:3}). 
\end{proof}
\begin{conjecture}
We conjecture that Theorem \ref{thm:Lower1dim} holds without the
conditions that $np$ is an integer. 
\end{conjecture}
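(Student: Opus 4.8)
\medskip
\noindent\emph{A possible approach.} The plan is to re-run the reduction used in Theorems~\ref{thm:epsilon} and~\ref{thm:Lower1dim}. With $M$ the moment generating function of Theorem~\ref{thm:epsilon}, the information-projection inequality gives $D\left(P\left\Vert bin(n,p)\right.\right)\ge\beta\,M'(\beta)-\bigl(M(\beta)-1\bigr)$ for the $\beta$ with $M'(\beta)=E_{P}\!\left[\tilde{\mathcal{K}}_{2}(X;n)\right]$, and two differentiations in $\beta$ reduce inequality~(\ref{eq:3}) to $M''(\beta)\le1$ for every $\beta$ with $M'(\beta)\in\left[-2^{-\nicefrac{1}{2}},0\right]$. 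Since $M'$ is a strictly increasing bijection of $\left(-\infty,0\right]$ onto itself ($M'(0)=0$, $M''>0$, $M'(\beta)\to-\infty$), this is the statement that $M''(\beta)\le1$ on $\left[\beta_{*},0\right]$, where $\beta_{*}$ is the unique root of $M'(\beta_{*})=-2^{-\nicefrac{1}{2}}$; and since $\beta\mapsto M''(\beta)$ is convex with $M''(0)=1$, it suffices to prove the single inequality
\[
M''(\beta_{*})\le1 .
\]
The hypothesis $np\in\mathbb{Z}$ was used in Theorem~\ref{thm:Lower1dim} only through the pointwise bound $x^{2}\exp(\beta_{0}x)\le1$ for $x\ge\beta_{0}=-\nicefrac{2}{\mathrm{e}}$, which applies to every term of $M''(\beta_{0})$ as soon as $m^{*}:=\min_{0\le x\le n}\tilde{\mathcal{K}}_{2}(x;n)\ge\beta_{0}$; for general $p$ one has $m^{*}<\beta_{0}$, so the conjecture amounts to proving $M''(\beta_{*})\le1$ without a pointwise estimate.

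Two preliminary reductions cut the problem down. The invariance of $\tilde{\mathcal{K}}_{2}(x;n)$, hence of $M$, under $p\leftrightarrow1-p$ and $X\leftrightarrow n-X$ lets us assume $p\le\nicefrac{1}{2}$. And from the parabola form
\[
\tilde{\mathcal{K}}_{2}(x;n)=\frac{(x-x^{*})^{2}-(\nicefrac{1}{2}-p)^{2}}{np(1-p)\left(2\frac{n-1}{n}\right)^{\nicefrac{1}{2}}}-\frac{1}{\left(2\frac{n-1}{n}\right)^{\nicefrac{1}{2}}},\qquad x^{*}=(n-1)p+\nicefrac{1}{2},
\]
the minimum over integers is obtained by replacing $(x-x^{*})^{2}$ by $d^{2}$ with $d=\mathrm{dist}(x^{*},\mathbb{Z})\le\nicefrac{1}{2}$, so that $m^{*}<-1$ forces $np(1-p)\left(\left(2\frac{n-1}{n}\right)^{\nicefrac{1}{2}}-1\right)<\nicefrac{1}{4}$; hence $np$ is bounded by an absolute constant $c_{0}$ once $n$ exceeds an absolute $n_{0}$. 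Three ranges then remain: (i) $m^{*}\ge-1$; (ii) $m^{*}<-1$ with $n\ge n_{0}$, hence $np\le c_{0}$; (iii) the finitely many $n<n_{0}$.

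In range~(i) I would estimate $M''(\beta_{*})$ directly, exploiting that when $m^{*}$ is only mildly negative the tilt pushing $M'$ down to $-2^{-\nicefrac{1}{2}}$ already concentrates $Q_{\beta_{*}}$ tightly around the vertex $x^{*}$, where $\tilde{\mathcal{K}}_{2}$ varies by $O\bigl((np(1-p))^{-1}\bigr)$. In range~(ii), when $np\le c_{0}$, the mass of $bin(n,p)$ and of every tilt $Q_{\beta}$, $\beta\in[\beta_{*},0]$, sits, up to arbitrarily small error, on $\{0,1,\dots,L\}$ with $L$ absolute, and there $\ln\bigl(Po(np,x)/bin(n,p,x)\bigr)=O(x^{2}/n)-O(xp)+O(np^{2})=O(1/n)$; the plan is to invoke the Poisson-approximation bound of \cite{Harremoes2016b} used in the Poisson section --- which there carries the constant $1$ rather than $\nicefrac{1}{2}$ --- to get $D\left(P\left\Vert Po(np)\right.\right)\ge\bigl(E_{P}[\tilde{\mathcal{C}}_{2}(X;np)]\bigr)^{2}$ for the normalized second Charlier polynomial $\tilde{\mathcal{C}}_{2}$, and then to transfer it to $bin(n,p)$, absorbing the $O(1/n)$ discrepancies --- in $D$ and in $E_{P}[\tilde{\mathcal{K}}_{2}]$ versus $E_{P}[\tilde{\mathcal{C}}_{2}]$ --- into the factor-$\nicefrac{1}{2}$ slack. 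That slack degenerates as $E_{P}[\tilde{\mathcal{K}}_{2}]\to0$, and there one uses instead the local estimate near $\beta=0$: from the explicit third central moment in the proof of Theorem~\ref{thm:epsilon} one checks that $M'''(0)=E_{bin}\!\left[\tilde{\mathcal{K}}_{2}(X;n)^{3}\right]$ stays bounded away from $0$ for $np\le c_{0}$, so $M''<1$ on a fixed left neighbourhood of $0$. The cases~(iii) would be settled as in Theorem~\ref{thm:Lower1dim}, but now as a one-parameter verification of $M''(\beta_{*})\le1$ over $p\in(0,\nicefrac{1}{2}]$ rather than a finite list.

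The hard part will be making the Poisson transfer in~(ii) sharp and uniform in $n$. Numerically $M''(\beta_{*})$ creeps up to $1$ along the critical family $p=\nicefrac{1}{2(n-1)}$, where $x^{*}=1$ is exactly an integer and $bin(n,p)\to Po(\nicefrac{1}{2})$, so the $O(1/n)$ errors in the transfer (and in the vertex-concentration estimate of range~(i)) must be controlled below a margin that vanishes in the limit --- precisely where the integrality hypothesis was doing real work. Fitting the two seams $np\asymp c_{0}$ and $E_{P}[\tilde{\mathcal{K}}_{2}]\to0$ together cleanly is the remaining obstacle.
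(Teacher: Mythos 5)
The statement you are addressing is left open in the paper: it appears only as a conjecture and no proof is offered, so there is no argument of the authors to compare yours against. What you have written is a research program rather than a proof, and by your own admission it does not close. The parts that are sound reproduce the paper's proof of Theorem \ref{thm:Lower1dim}: the information projection, the two differentiations reducing (\ref{eq:3}) to $M''(\beta)\leq1$ on the relevant interval, the convexity of $\beta\mapsto M''(\beta)$ reducing this to the single endpoint inequality $M''(\beta_{*})\leq1$, and the correct diagnosis that integrality of $np$ enters only through the pointwise bound $x^{2}\exp(\beta_{0}x)\leq1$, which needs the integer minimum $m^{*}$ of $\tilde{\mathcal{K}}_{2}$ to stay above roughly $-\nicefrac{2}{\mathrm{e}}$. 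The symmetry reduction to $p\leq\nicefrac{1}{2}$ and the observation that $m^{*}<-1$ forces $np(1-p)$ below an absolute constant are also correct and useful.

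The gaps lie in all three of your ranges. In range (i) you assert that the tilted measure $Q_{\beta_{*}}$ concentrates near the vertex and that this controls $M''(\beta_{*})$, but no estimate is actually produced; note that already for $m^{*}$ between about $-0.76$ and $-1$ the pointwise bound $f(x)\leq1$ fails, so this range requires a genuinely new inequality, not a refinement of the old one. In range (ii) the transfer from the Charlier--Poisson bound of \cite{Harremoes2016b} to the binomial must beat the $O(\nicefrac{1}{n})$ discrepancies with the factor-two slack, but, as you yourself observe, along the family $p=\nicefrac{1}{2(n-1)}$ the quantity $M''(\beta_{*})$ approaches $1$, so the available margin vanishes exactly where the error terms need to be absorbed; nothing in the sketch shows that the errors vanish faster than the margin. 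Range (iii) is no longer the paper's finite list of $91$ cases but a verification over a continuum of $p\in(0,\nicefrac{1}{2}]$ for each small $n$, for which no method is given. Finally, be aware that $M''(\beta_{*})\leq1$ is sufficient but not necessary for (\ref{eq:3}); if your numerical observation means that $M''(\beta_{*})$ can exceed $1$ for some non-integer $np$, the whole reduction collapses there even though the conjecture itself might still be true, so the critical family should be examined before any more effort is invested in this route.
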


\section{Improved bounds on information divergence\\ for multivariate hypergeometric
distributions }

We consider the situation where there are $N$ balls of $C$ different
colors. Let $k_{c}$ denote the number of balls of color $c\in\left\{ 1,2,\dots,C\right\} $
and let $p_{c}=\nicefrac{k_{c}}{N}$. Let $U_{n}$ denote the number
of balls in different colors drawn without replacement in a sample
of size $n$ and let $V_{n}$ denote the number of balls for different
colors drawn with replacement. Then $U_{n}$ has a multivariate hypergeometric
distribution and $V_{n}$ has a multinomial distribution. We are interested
in bounds on information divergence that we, with a little abuse of
notation, will denote $D\left(U_{n}\left\Vert V_{n}\right.\right)$.
We consider $U_{n}$ as a function of $X^{n}$ where $X^{n}=\left(X_{1},X_{2},\dots,X_{n}\right)$
denotes a sequence colors in the sample drawn without replacement.
Similarly we consider $V_{n}$ as a function of $Y^{n}$ where $Y^{n}=\left(Y_{1},Y_{2},\dots,Y_{n}\right)$
denotes a sequence of colors drawn with replacement. Let $I\left(\cdot,\cdot\mid\cdot\right)$
denote conditional mutual information. 
\begin{lemma}
\label{lem:mutualExp}We have 
\[
D\left(\left.U_{n}\right\Vert V_{n}\right)=\sum_{j=1}^{n-1}\left(n-j\right)I\left(X^{j},X_{j+1}\mid X^{j-1}\right)
\]
\end{lemma}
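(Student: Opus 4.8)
The plan is to lift the divergence between the two colour-count statistics to a divergence between the underlying ordered samples, and then to expand it by two chain rules, with exchangeability doing the essential work at the last step. First I would observe that $U_n$ and $V_n$ arise by applying one and the same map $g$ --- ``record the vector of colour counts'' --- to $X^n$, respectively $Y^n$. Conditioned on a count vector $u$ with $\sum_c u_c=n$, both $X^n$ and $Y^n$ turn out to be uniform over the $\binom{n}{u_1,\dots,u_C}$ orderings realizing $u$: for $Y^n$ because all such orderings carry the common mass $\prod_c p_c^{\,u_c}$, and for $X^n$ because sampling without replacement makes all orderings with prescribed counts equiprobable. Since these conditional laws coincide, and since every $u$ with $P_{U_n}(u)>0$ satisfies $u_c\le k_c$ (so $p_c>0$ whenever $u_c>0$, giving absolute continuity), the chain rule for information divergence along $g$ collapses to $D(U_n\Vert V_n)=D(X^n\Vert Y^n)$.

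Next, writing $p=(p_1,\dots,p_C)$, I would apply the chain rule for relative entropy to get $D(X^n\Vert Y^n)=\sum_{j=1}^{n}E_{X^{j-1}}[D(P_{X_j\mid X^{j-1}}\Vert P_{Y_j\mid Y^{j-1}})]$. Because $Y^n$ is i.i.d.\ one has $P_{Y_j\mid Y^{j-1}}=p$, and because $X^n$ is exchangeable the marginal of a single draw is $P_{X_j}=p$; hence the $j$-th term is exactly $E_{X^{j-1}}[D(P_{X_j\mid X^{j-1}}\Vert P_{X_j})]=I(X^{j-1},X_j)$. The $j=1$ term vanishes, so at this stage $D(U_n\Vert V_n)=\sum_{i=1}^{n-1}I(X^i,X_{i+1})$.

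Finally I would unfold each term via $I(X^i,X_{i+1})=\sum_{j=1}^{i}I(X_j,X_{i+1}\mid X^{j-1})$ and interchange the two sums, obtaining $\sum_{j=1}^{n-1}\sum_{i=j}^{n-1}I(X_j,X_{i+1}\mid X^{j-1})$. The key point: conditioned on any prefix $X^{j-1}$, the remaining draws $(X_j,\dots,X_n)$ are again a sampling-without-replacement --- hence exchangeable --- sequence, so $I(X_j,X_{i+1}\mid X^{j-1})$ does not depend on $i\in\{j,\dots,n-1\}$ and equals $I(X_j,X_{j+1}\mid X^{j-1})=I(X^j,X_{j+1}\mid X^{j-1})$. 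There are $n-j$ such indices, which produces the weights $n-j$ and closes the proof.

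I expect the main obstacle to be the bookkeeping around exchangeability in the first and third steps --- verifying that the count map is literally the same function in both models so the conditional-divergence term really is zero, and that conditioning on a prefix leaves a genuinely exchangeable sequence so the $n-j$ identical contributions may legitimately be pooled. The two chain rules themselves, and the identification of a single expected conditional divergence with a (conditional) mutual information, are routine.
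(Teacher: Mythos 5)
Your proof is correct and follows essentially the same route as the paper's: reduce $D(U_n\Vert V_n)$ to $D(X^n\Vert Y^n)$, decompose this as $\sum_{m=1}^{n-1}I(X^m,X_{m+1})$, expand each term by the chain rule for mutual information, and use exchangeability to identify $I(X_j,X_{m+1}\mid X^{j-1})$ with $I(X^j,X_{j+1}\mid X^{j-1})$ before interchanging the sums. The only cosmetic difference is that you obtain the middle identity directly from the chain rule for relative entropy rather than via the multiinformation identity $D(X^n\Vert Y^n)=I(X_1,\dots,X_n)$, and you spell out the sufficiency and conditional-exchangeability details that the paper leaves implicit.
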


\begin{proof}
We have 
\begin{dmath*}
D\left(\left.U_{n}\right\Vert V_{n}\right)  =D\left(\left.X^{n}\right\Vert Y^{n}\right)\\
  =I\left(X_{1},X_{2},\dots,X_{n}\right)\\
  =\sum_{m=1}^{n-1}I\left(X^{m},X_{m+1}\right)\\
  =\sum_{m=1}^{n-1}\sum_{j=1}^{m}I\left(X^{j},X_{m+1} \hiderel\mid X^{j-1}\right).
\end{dmath*}
Using exchange-ability we get
\begin{dmath*}
D\left(\left.U_{n}\right\Vert V_{n}\right)  =\sum_{m=1}^{n-1}\sum_{j=1}^{m}I\left(X^{j},X_{j+1} \hiderel\mid X^{j-1}\right)\\
  =\sum_{j=1}^{n-1}\sum_{m=j}^{n-1}I\left(X^{j},X_{j+1} \hiderel\mid X^{j-1}\right)\\
  =\sum_{j=1}^{n-1}\left(n-j\right)I\left(X^{j},X_{j+1} \hiderel\mid X^{j-1}\right).
\end{dmath*}
\end{proof}
We introduce the $\chi^{2}$-divergence by
\begin{dmath*}
\chi^{2}\left(P,Q\right)  =\int\left(\frac{\mathrm{d}P}{\mathrm{d}Q}-1\right)^{2}\,\mathrm{d}Q\\
  =\int\frac{\mathrm{d}P}{\mathrm{d}Q}\,\mathrm{d}P-1\,.
\end{dmath*}
Stam used the inequality $D\left(P\Vert Q\right)\leq\chi^{2}\left(P,Q\right)$
to derive his upper bound (\ref{eq:Stam}). From Theorem \ref{thm:Lower1dim}
and inequality (\ref{eq:1dimHybLower}) we should aim at replacing
the denominator 
\[
2\left(N-1\right)\left(N-n+1\right)
\]
by an expression
closer to $4\left(N-1\right)^{2}.$

The bounds we have derived are based on the following sequence of
inequalities that are derived in Appendix A.
We use $\phi\left(x\right)=x\ln\left(x\right)-\left(x-1\right).$
\begin{dgroup*}
\begin{dmath}\phi\left(x\right)  \geq0\,,\label{eq:positiv}\end{dmath}
\begin{dmath}\phi\left(x\right)  \leq\left(x-1\right)^{2},\label{eq:chi}\end{dmath}
\begin{dmath}\phi\left(x\right)  \geq\frac{1}{2}\left(x-1\right)^{2}-\frac{1}{6}\left(x-1\right)^{3},\label{eq:nedre}\end{dmath}
\begin{dmath}\phi\left(x\right)  \leq\frac{1}{2}\left(x-1\right)^{2}-\frac{1}{6}\left(x-1\right)^{3}+\frac{1}{3}\left(x-1\right)^{4}.\label{eq:oevre}\end{dmath}
\end{dgroup*}

The first inequality (\ref{eq:positiv}) implies non-negativity of
information divergence and mutual information. The second inequality
(\ref{eq:chi}) can be used to derive to Stam's inequality (\ref{eq:Stam}),
but the higher order terms are needed to get the asymptotics right. 
\begin{lemma}\label{lem:MutualUpper}
The mutual information is bounded as
\[
\frac{C-1}{2\left(N-j\right)^{2}}\leq I\left(X^{j},X_{j+1}\mid X^{j-1}\right)\leq\frac{C-1}{\left(N-j\right)^{2}}.
\]
\end{lemma}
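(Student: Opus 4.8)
The plan is to condition on the first $j-1$ draws, reduce everything to two consecutive draws without replacement from the residual urn, and then expand $D=\sum p'_cp'_{c'}\,\phi(\cdot)$ using the inequalities (\ref{eq:chi}) and (\ref{eq:nedre}). First I would note that $I(X^j,X_{j+1}\mid X^{j-1})=I(X_j,X_{j+1}\mid X^{j-1})$, since $X^{j-1}$ is a deterministic part of $X^j$, and that conditionally on $X^{j-1}=x^{j-1}$ the pair $(X_j,X_{j+1})$ is distributed as the first two draws without replacement from an urn with $M=N-j+1$ balls, $m_c$ of colour $c$, and $p'_c=m_c/M$. Writing the joint law as $q_{cc'}$ and the (exchangeable, hence common) marginal as $p'_c$, the conditional mutual information equals $D\big(q\,\Vert\,p'\otimes p'\big)=\sum_{c,c'}p'_cp'_{c'}\,\phi(r_{cc'})$ with $r_{cc'}=q_{cc'}/(p'_cp'_{c'})$; a direct computation gives $r_{cc'}-1=\tfrac1{M-1}$ for $c\ne c'$ and $r_{cc}-1=-\tfrac{1-p'_c}{(M-1)\,p'_c}$.

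For the upper bound I would apply $\phi(x)\le(x-1)^2$, inequality (\ref{eq:chi}), so the conditional mutual information is at most $\sum_{c,c'}p'_cp'_{c'}(r_{cc'}-1)^2$; inserting the two expressions for $r_{cc'}-1$ and using $\sum_{c\ne c'}p'_cp'_{c'}=1-\sum_c(p'_c)^2$ together with $\sum_c(1-p'_c)^2=C-2+\sum_c(p'_c)^2$, the whole sum collapses to exactly $\frac{C-1}{(M-1)^2}=\frac{C-1}{(N-j)^2}$, independently of the residual composition. Averaging over $X^{j-1}$ preserves this, which is the right-hand inequality.

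For the lower bound I would use $\phi(x)\ge\tfrac12(x-1)^2-\tfrac16(x-1)^3$, inequality (\ref{eq:nedre}); the same substitution turns the conditional mutual information into at least $\tfrac12\frac{C-1}{(M-1)^2}-\tfrac16 S_3$ with $S_3=\sum_{c,c'}p'_cp'_{c'}(r_{cc'}-1)^3$, and the key identity is
\[
S_3=\frac{1}{(M-1)^3}\Big[\,\textstyle 1-\sum_c(p'_c)^2-\sum_c(1-p'_c)^3/p'_c\,\Big]=\frac{3C-2-\sum_c 1/p'_c}{(M-1)^3}.
\]
Since $\sum_c 1/p'_c\ge C^2$ by Cauchy--Schwarz and $C^2-3C+2=(C-1)(C-2)\ge0$, we get $S_3\le0$, hence the conditional mutual information is at least $\tfrac12\frac{C-1}{(M-1)^2}=\frac{C-1}{2(N-j)^2}$, and averaging again preserves it.

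The step I expect to cause trouble is precisely this averaging for the lower bound when a colour can be exhausted within the first $j-1$ draws: the displayed identities hold with $C$ replaced by the number $C_{\mathrm{res}}\le C$ of colours actually present in the residual urn, so without extra hypotheses the averaged lower bound degrades to $\frac{\,\mathbb E[C_{\mathrm{res}}]-1}{2(N-j)^2}$. I would therefore read the lemma under the (asymptotically harmless) assumption that $k_c>j-1$ for every colour, so that $C_{\mathrm{res}}=C$ almost surely, or else quantify the extra slack in $\phi$ at the near-exhausted coordinates (where $r_{cc}-1$ is large and negative and (\ref{eq:nedre}) is very far from tight) and show it absorbs the deficit. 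The clean content of the argument is the exact evaluation of the conditional $\chi^2$-divergence and the sign of the cubic correction $S_3$.
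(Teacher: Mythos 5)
Your argument is, in substance, the paper's own proof (given in Appendix~B): the same reduction to two consecutive draws from the residual urn, the same use of inequalities (\ref{eq:chi}) and (\ref{eq:nedre}), the same exact evaluation of the $\chi^2$-term as $\frac{C-1}{(M-1)^2}$, and your cubic correction $-\frac16 S_3=\frac{Q}{6(M-1)^3}$ with $Q=\sum_c 1/p'_c-3C+2\ge (C-1)(C-2)\ge 0$ is literally the paper's computation of $Q$. The only organizational difference is that the paper indexes the double sum by the colour of the second draw together with the indicator that the first draw had that colour, while you index it by the ordered pair of colours; the resulting numbers agree.

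The step where you hesitate is exactly the step where the paper is weakest. The paper disposes of $j>1$ with ``without loss of generality we may assume that $j=1$,'' and Appendix~B only treats $j=1$; the averaging over $X^{j-1}$, and in particular the possibility that the residual urn contains fewer than $C$ colours, is never addressed. Your worry is not a technicality: the lower half of the lemma is false as stated. Take $N=C=5$ with $k_c\equiv 1$ and $j=3$. Conditionally on $X^{2}$ the residual urn has $3$ balls of $3$ distinct colours, so $I\left(X^{3},X_{4}\mid X^{2}\right)=\ln 3-\ln 2\approx 0.405$, whereas $\frac{C-1}{2(N-j)^{2}}=\frac{4}{8}=\frac12$. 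Hence the lower bound requires an additional hypothesis (for instance $\min_c k_c\ge n-1$, so that no colour can be exhausted before the draws being compared), or must be weakened to $\frac{E\left[C_{\mathrm{res}}\right]-1}{2(N-j)^{2}}$ as you propose. The upper bound is unaffected, since replacing $C$ by $C_{\mathrm{res}}\le C$ only helps there. In short, your proof is as complete as the paper's, and the gap you flag is a gap in the paper's statement and proof, not in your argument.
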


\begin{proof}
Without loss of generality we may assume that $j=1.$ In this case
the inequalities follow directly from the inequalities (\ref{eq:Stam}) 
and (\ref{eq:Stamnedre}) of Stam with $n=1$. For completeness we give the
whole proof in Appendix B. 
\end{proof}
Combining Lemma \ref{lem:mutualExp} with Lemma \ref{lem:MutualUpper}
leads to the inequalities
\[
\frac{C-1}{2}\sum_{j=1}^{n-1}\frac{n-j}{\left(N-j\right)^{2}}\leq D\left(U_{n}\left\Vert V_{n}\right.\right)\leq\left(C-1\right)\sum_{j=1}^{n-1}\frac{n-j}{\left(N-j\right)^{2}}.
\]
We see that the lower bound and the upper bound are are off by a factor
of 2. Figure \ref{fig:1} illustrates that this factor is unavoidable
if we want bounds that do not depend on the number of balls in each
color.

The following simple lower bound is stronger than the lower bound (\ref{eq:Stamnedre})
by Stam for $n>\nicefrac{N}{2}.$
\begin{theorem}
\label{thm:kvadratiskLower}For all $n$ the following lower bound
holds 
\begin{dmath}\label{eq:*}
D\left(U_{n}\left\Vert V_{n}\right.\right)  \geq\left(C-1\right)\frac{n\left(n-1\right)}{4\left(N-1\right)^{2}}.
\end{dmath}
\end{theorem}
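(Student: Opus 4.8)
The plan is to combine the two lemmas already at hand. By Lemma~\ref{lem:mutualExp} one has
\[
D\left(U_n\Vert V_n\right)=\sum_{j=1}^{n-1}(n-j)\,I\left(X^{j},X_{j+1}\mid X^{j-1}\right),
\]
and by the left-hand inequality of Lemma~\ref{lem:MutualUpper} every summand is at least $\frac{C-1}{2(N-j)^{2}}$. This is exactly the lower bound displayed just before the statement,
\[
D\left(U_n\Vert V_n\right)\ \geq\ \frac{C-1}{2}\sum_{j=1}^{n-1}\frac{n-j}{(N-j)^{2}}\,,
\]
so the whole theorem reduces to the purely arithmetic inequality $\sum_{j=1}^{n-1}\frac{n-j}{(N-j)^{2}}\geq\frac{n(n-1)}{2(N-1)^{2}}$.

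For that last step I would simply note that each index $j$ in the sum satisfies $1\leq j\leq n-1\leq N-1$ (one always has $n\leq N$, since without replacement one cannot draw more balls than are present), so $1\leq N-j\leq N-1$ and hence $\frac{1}{(N-j)^{2}}\geq\frac{1}{(N-1)^{2}}$ for each such $j$. Pulling this constant out of the sum and using $\sum_{j=1}^{n-1}(n-j)=\sum_{i=1}^{n-1}i=\frac{n(n-1)}{2}$ gives $\sum_{j=1}^{n-1}\frac{n-j}{(N-j)^{2}}\geq\frac{n(n-1)}{2(N-1)^{2}}$, and substituting back yields
\[
D\left(U_n\Vert V_n\right)\ \geq\ \frac{C-1}{2}\cdot\frac{n(n-1)}{2(N-1)^{2}}=(C-1)\frac{n(n-1)}{4(N-1)^{2}}\,,
\]
which is (\ref{eq:*}). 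The cases $n\leq1$ are degenerate, with both sides vanishing.

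There is no real computational obstacle; the substantive input is Lemma~\ref{lem:MutualUpper}, whose lower bound rests on Stam's inequality (\ref{eq:Stamnedre}) and therefore holds for every admissible value of $N$. This is precisely what makes the conclusion unconditional in $N$: specializing to $C=2$ recovers the bound (\ref{eq:1dimHybLower}) for all $N$, whereas the derivation following Theorem~\ref{thm:epsilon} only guaranteed it for $N$ large. The one place where a choice is made is the blunt estimate $N-j\leq N-1$; it is adequate here, producing the factor $(N-1)^{2}$ in place of Stam's $(N-1)(N-n+1)$ and hence an improvement on (\ref{eq:Stamnedre}) exactly when $n>N/2$, as announced in the sentence preceding the theorem, even though a sharper treatment of the same sum would be required to capture the correct asymptotics pursued elsewhere in the paper.
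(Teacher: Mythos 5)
Your proof is correct and follows essentially the same route as the paper: combine Lemma~\ref{lem:mutualExp} with the lower bound in Lemma~\ref{lem:MutualUpper}, then replace each $(N-j)^{-2}$ by $(N-1)^{-2}$ and evaluate $\sum_{j=1}^{n-1}(n-j)=\frac{n(n-1)}{2}$. Your surrounding remarks about why this makes the bound unconditional in $N$ and where it improves on Stam are accurate but not needed for the argument itself.
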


\begin{proof}
We have 
\begin{dmath*}
D\left(U_{n}\left\Vert V_{n}\right.\right)  \geq\frac{C-1}{2}\sum_{j=1}^{n-1}\frac{n-j}{\left(N-j\right)^{2}}\\
  \geq\frac{C-1}{2\left(N-1\right)^{2}}\cdot\sum_{j=1}^{n-1}\left(n-j\right)\\
  =\frac{C-1}{2\left(N-1\right)^{2}}\cdot\frac{n\left(n-1\right)}{2}\\
  =\left(C-1\right)\frac{n\left(n-1\right)}{4\left(N-1\right)^{2}}\,.
\end{dmath*}
\end{proof}
An even stronger lower bound can be derived. Later we will prove that
the stronger lower bound is asymptotically optimal.
\begin{theorem}
For all $n\leq N$ the multivariate hypergeometric distribution satisfies
the following lower bound.
\begin{dmath*}
D\left(U_{n}\left\Vert V_{n}\right.\right)  \geq\left(C-1\right)\frac{\ln\left(\frac{N}{N-n+1}\right)-\frac{n-1}{N-1}}{2}.
\end{dmath*}
\end{theorem}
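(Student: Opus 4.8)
The plan is to start from the lower bound that was already displayed just after Lemma~\ref{lem:MutualUpper}: combining Lemma~\ref{lem:mutualExp} with the left inequality of Lemma~\ref{lem:MutualUpper} gives
\[
D\left(U_n\left\Vert V_n\right.\right)\geq\frac{C-1}{2}\sum_{j=1}^{n-1}\frac{n-j}{\left(N-j\right)^2},
\]
so the whole statement reduces to the purely elementary inequality
\[
\sum_{j=1}^{n-1}\frac{n-j}{\left(N-j\right)^2}\geq\ln\!\left(\frac{N}{N-n+1}\right)-\frac{n-1}{N-1}.
\]

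First I would rewrite the summand using $n-j=(N-j)-(N-n)$, which gives $\frac{n-j}{(N-j)^2}=\frac{1}{N-j}-\frac{N-n}{(N-j)^2}$, and then substitute $m=N-j$ so that $m$ runs over $N-n+1,\dots,N-1$:
\[
\sum_{j=1}^{n-1}\frac{n-j}{\left(N-j\right)^2}=\sum_{m=N-n+1}^{N-1}\frac{1}{m}-\left(N-n\right)\sum_{m=N-n+1}^{N-1}\frac{1}{m^2}.
\]
For the first sum, since $x\mapsto\nicefrac{1}{x}$ is decreasing, $\frac{1}{m}\geq\int_m^{m+1}\frac{\mathrm{d}x}{x}$, hence $\sum_{m=N-n+1}^{N-1}\frac{1}{m}\geq\int_{N-n+1}^{N}\frac{\mathrm{d}x}{x}=\ln\!\left(\frac{N}{N-n+1}\right)$. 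For the second sum I would use the telescoping bound $\frac{1}{m^2}\leq\frac{1}{m(m-1)}=\frac{1}{m-1}-\frac{1}{m}$, which gives $\sum_{m=N-n+1}^{N-1}\frac{1}{m^2}\leq\frac{1}{N-n}-\frac{1}{N-1}$ and therefore $\left(N-n\right)\sum_{m=N-n+1}^{N-1}\frac{1}{m^2}\leq 1-\frac{N-n}{N-1}=\frac{n-1}{N-1}$. Subtracting yields exactly the inequality above, and multiplying by $\frac{C-1}{2}$ finishes the argument.

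The only subtlety, and the one place I expect to need care, is the range of validity of the telescoping bound $\frac{1}{m^2}\le\frac{1}{m-1}-\frac{1}{m}$: it needs $m\geq2$. When $n<N$ the smallest index is $m=N-n+1\geq2$, so all is well; when $n=N$ the prefactor $N-n$ is zero, so the second sum never enters and the rewriting already gives the desired estimate with equality on that piece. The trivial case $n=1$ has both sides equal to zero. Thus no real obstacle remains once the sum is split as above — the argument is a short finite-sum estimate plus a one-line case split on whether $n=N$.
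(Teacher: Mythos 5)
Your proof is correct and follows essentially the same route as the paper: the same starting bound $\frac{C-1}{2}\sum_{j=1}^{n-1}\frac{n-j}{(N-j)^{2}}$, the same split $\frac{n-j}{(N-j)^{2}}=\frac{1}{N-j}-\frac{N-n}{(N-j)^{2}}$, and the same integral comparison (your telescoping bound $\frac{1}{m^{2}}\leq\frac{1}{m-1}-\frac{1}{m}$ is exactly the paper's $\int_{m-1}^{m}x^{-2}\,\mathrm{d}x$ in disguise). The only difference is that you treat the boundary cases $n=N$ and $n=1$ explicitly, which the paper glosses over; that is a welcome bit of extra care rather than a deviation.
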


\begin{proof}
We use an integral to lower bound the sum.
\begin{dmath*}
D\left(\left.U_{n}\right\Vert V_{n}\right)  \geq\frac{C-1}{2}\sum_{j=1}^{n-1}\frac{n-j}{\left(N-j\right)^{2}}\\
  =\frac{C-1}{2}\left(\sum_{j=1}^{n-1}\frac{\left(N-j\right)-\left(N-n\right)}{\left(N-j\right)^{2}}\right)\\
  =\frac{C-1}{2}\left(\sum_{j=1}^{n-1}\frac{1}{N-j}-\sum_{j=1}^{n-1}\frac{N-n}{\left(N-j\right)^{2}}\right).
\end{dmath*}
Each of the sums can be bounded by an integral
\begin{dmath*}
\frac{C-1}{2}\left(\sum_{j=1}^{n-1}\frac{1}{N-j}-\left(N-n\right)\sum_{j=1}^{n-1}\frac{1}{\left(N-j\right)^{2}}\right)
\geq\frac{C-1}{2}\left(\int_{0}^{n-1}\frac{1}{N-x}\,\mathrm{d}x-\left(N-n\right)\int_{1}^{n}\frac{1}{\left(N-x\right)^{2}}\,\mathrm{d}x\right)
\geq\frac{C-1}{2}\left(\ln\left(\frac{N}{N-n+1}\right)-\left(N-n\right)\left(\frac{1}{N-n}-\frac{1}{N-1}\right)\right)
=\frac{C-1}{2}\left(\ln\left(\frac{N}{N-n+1}\right)-\left(\frac{N-1}{N-1}-\frac{N-n}{N-1}\right)\right)
=\left(C-1\right)\frac{\ln\left(\frac{N}{N-n+1}\right)-\frac{n-1}{N-1}}{2}\,.
\end{dmath*}
\end{proof}
\begin{theorem}
For $n\leq\nicefrac{N}{2}$ the multivariate hypergeometric distribution
satisfies the following lower bound.
\begin{dmath}
D\left(U_{n}\left\Vert V_{n}\right.\right)  \geq\left(C-1\right)\frac{r-1-\ln\left(r\right)}{2}.\label{eq:optLower}
\end{dmath}
where $r=1-\frac{n-1}{N-\nicefrac{1}{2}}.$
\end{theorem}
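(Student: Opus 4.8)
The plan is to lower bound the sum $\sum_{j=1}^{n-1}\frac{n-j}{(N-j)^{2}}$ which, combined with Lemma~\ref{lem:mutualExp} and Lemma~\ref{lem:MutualUpper}, gives $D\left(U_{n}\Vert V_{n}\right)\ge\frac{C-1}{2}\sum_{j=1}^{n-1}\frac{n-j}{(N-j)^{2}}$, but this time by a \emph{midpoint} integral comparison rather than by the left/right endpoint comparisons used in the previous theorem. Put $f(x)=\frac{n-x}{(N-x)^{2}}$. A routine differentiation gives $f''(x)=\frac{2\,(3n-2N-x)}{(N-x)^{4}}$, and the hypothesis $n\le\nicefrac{N}{2}$ forces $3n-2N\le-\nicefrac{N}{2}<0\le x$, so $f$ is concave on $[0,n-\nicefrac{1}{2}]$ (an interval on which also $N-x>0$).

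Next I would apply the Hermite--Hadamard inequality (equivalently, Jensen with the uniform distribution) on each unit interval $[j-\nicefrac{1}{2},j+\nicefrac{1}{2}]$ for $j=1,\dots,n-1$: concavity yields $f(j)\ge\int_{j-1/2}^{j+1/2}f(x)\,\mathrm{d}x$, and summing gives
\[
\sum_{j=1}^{n-1}\frac{n-j}{(N-j)^{2}}\ \ge\ \int_{1/2}^{\,n-1/2}\frac{n-x}{(N-x)^{2}}\,\mathrm{d}x .
\]
Then I would evaluate the integral by the substitution $u=N-x$, using $n-x=u-(N-n)$, to obtain
\[
\int_{1/2}^{\,n-1/2}\frac{n-x}{(N-x)^{2}}\,\mathrm{d}x=\Bigl[\ln u+\tfrac{N-n}{u}\Bigr]_{N-n+1/2}^{\,N-1/2}=\ln\frac{N-\nicefrac{1}{2}}{N-n+\nicefrac{1}{2}}-\frac{(N-n)(n-1)}{(N-\nicefrac{1}{2})(N-n+\nicefrac{1}{2})} .
\]
Finally, discarding the factor $\frac{N-n}{N-n+\nicefrac{1}{2}}\le1$ from the last term leaves $\ln\frac{1}{r}-(1-r)=r-1-\ln r$ with $r=1-\frac{n-1}{N-\nicefrac{1}{2}}=\frac{N-n+\nicefrac{1}{2}}{N-\nicefrac{1}{2}}$; multiplying through by $\frac{C-1}{2}$ is the claim. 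The degenerate case $n=1$ is consistent, since every quantity is then $0$.

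The only genuinely load-bearing step is the concavity of $f$, which is exactly where $n\le\nicefrac{N}{2}$ enters: if $n$ were allowed to exceed $\nicefrac{2N}{3}$ then $f''$ would be positive near $x=0$ and the midpoint bound would point the wrong way. A secondary point requiring a little care is the final simplification: one must keep $\tfrac{N-n}{u}$ intact and bound $\tfrac{N-n}{N-n+1/2}$ by $1$, since the superficially similar move of crudely bounding $\tfrac{N-n}{N-1/2}-1$ would produce $-\tfrac{n-1/2}{N-1/2}$ instead of $-\tfrac{n-1}{N-1/2}$ and miss the stated bound. Everything else is mechanical.
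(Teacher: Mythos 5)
Your proof is correct and follows essentially the same route as the paper's: both establish $D\left(U_{n}\Vert V_{n}\right)\geq\frac{C-1}{2}\sum_{j=1}^{n-1}\frac{n-j}{(N-j)^{2}}$ from the two lemmas, lower bound the sum by the midpoint integral $\int_{\nicefrac{1}{2}}^{n-\nicefrac{1}{2}}\frac{n-x}{(N-x)^{2}}\,\mathrm{d}x$ via concavity (which is exactly where $n\leq\nicefrac{N}{2}$ is used), evaluate it, and then bound the factor $\frac{N-n}{N-n+\nicefrac{1}{2}}$ by $1$. Your only addition is the explicit computation of $f''$ to justify the concavity that the paper merely asserts, which is a welcome clarification.
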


\begin{proof}
Since $n\leq\nicefrac{N}{2}$ the function $j\to\frac{n-j}{\left(N-j\right)^{2}}$
is concave and the sum can be lower bounded by an integral.
\begin{dmath*}
D\left(\left.U_{n}\right\Vert V_{n}\right)\geq\frac{C-1}{2}\sum_{j=1}^{n-1}\frac{n-j}{\left(N-j\right)^{2}}\geq\frac{C-1}{2}\left(\int_{\nicefrac{1}{2}}^{n-\nicefrac{1}{2}}\frac{n-x}{\left(N-x\right)^{2}}\,dx\right)
=\frac{C-1}{2}\left(\int_{\nicefrac{1}{2}}^{n-\nicefrac{1}{2}}\frac{1}{N-x}\,\mathrm{d}x-\left(N-n\right)\int_{\nicefrac{1}{2}}^{n-\nicefrac{1}{2}}\frac{1}{\left(N-x\right)^{2}}\,\mathrm{d}x\right)
=\frac{C-1}{2}\left(\ln\left(\frac{N-\nicefrac{1}{2}}{N-n+\nicefrac{1}{2}}\right)-\left(N-n\right)\left(\frac{1}{N-n+\nicefrac{1}{2}}-\frac{1}{N-\nicefrac{1}{2}}\right)\right)
=\frac{C-1}{2}\left(\ln\left(\frac{N-\nicefrac{1}{2}}{N-n+\nicefrac{1}{2}}\right)-\left(\frac{N-n}{N-n+\nicefrac{1}{2}}\right)\left(\frac{n-1}{N-\nicefrac{1}{2}}\right)\right)
\geq\frac{C-1}{2}\left(\ln\left(\frac{1}{1-\frac{n-1}{N-\nicefrac{1}{2}}}\right)-\frac{n-1}{N-\nicefrac{1}{2}}\right).
\end{dmath*}
\end{proof}
\begin{theorem}
The following inequality holds.
\[
D\left(\left.U_{n}\right\Vert V_{n}\right)\leq\left(C-1\right)\left(\ln\left(\frac{N-1}{N-n}\right)+\frac{1}{N-n+1}-\frac{n}{N}\right).
\]
\end{theorem}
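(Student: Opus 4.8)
The plan is to feed the two lemmas already established into each other and then replace the resulting sum by integrals. By Lemma~\ref{lem:mutualExp} together with the upper estimate in Lemma~\ref{lem:MutualUpper},
\[
D\left(\left.U_{n}\right\Vert V_{n}\right)\leq\left(C-1\right)\sum_{j=1}^{n-1}\frac{n-j}{\left(N-j\right)^{2}},
\]
so it suffices to show $\sum_{j=1}^{n-1}\frac{n-j}{(N-j)^{2}}\leq\ln\left(\frac{N-1}{N-n}\right)+\frac{1}{N-n+1}-\frac{n}{N}$. (For $n=N$ the right-hand side is $+\infty$ and there is nothing to prove, so one may assume $n<N$; it may be cleanest to state the theorem for $n<N$.)

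First I would split each summand exactly as in the preceding lower-bound theorems, writing $\frac{n-j}{(N-j)^{2}}=\frac{1}{N-j}-\frac{N-n}{(N-j)^{2}}$, so that
\[
\sum_{j=1}^{n-1}\frac{n-j}{\left(N-j\right)^{2}}=\sum_{j=1}^{n-1}\frac{1}{N-j}-\left(N-n\right)\sum_{j=1}^{n-1}\frac{1}{\left(N-j\right)^{2}}.
\]
The key observation is that, because this time we want an \emph{upper} bound, the first sum must be bounded from above and the second (which enters with a minus sign) from below; since both $x\mapsto\frac{1}{N-x}$ and $x\mapsto\frac{1}{(N-x)^{2}}$ are increasing on $[0,n)$, this dictates two different alignments of the integration windows. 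For the first sum, $\frac{1}{N-j}\leq\int_{j}^{j+1}\frac{\mathrm{d}x}{N-x}$, hence $\sum_{j=1}^{n-1}\frac{1}{N-j}\leq\int_{1}^{n}\frac{\mathrm{d}x}{N-x}=\ln\left(\frac{N-1}{N-n}\right)$. For the second sum, $\frac{1}{(N-j)^{2}}\geq\int_{j-1}^{j}\frac{\mathrm{d}x}{(N-x)^{2}}$, hence $\sum_{j=1}^{n-1}\frac{1}{(N-j)^{2}}\geq\int_{0}^{n-1}\frac{\mathrm{d}x}{(N-x)^{2}}=\frac{1}{N-n+1}-\frac{1}{N}$.

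Finally I would substitute these estimates and simplify, using $-(N-n)\left(\frac{1}{N-n+1}-\frac{1}{N}\right)=-\left(1-\frac{1}{N-n+1}\right)+\left(1-\frac{n}{N}\right)=\frac{1}{N-n+1}-\frac{n}{N}$, which gives precisely the desired bound on the sum; multiplying through by $C-1$ completes the argument. There is no substantial obstacle here — the computation is of the same flavour as the three lower-bound theorems just proved — and the only point requiring care is getting the direction of each Riemann comparison right (an over-estimate for the increasing function $\frac{1}{N-x}$ and an under-estimate for the increasing function $\frac{1}{(N-x)^{2}}$), together with the corresponding unit shift of the interval of integration.
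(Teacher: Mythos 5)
Your proposal is correct and follows essentially the same route as the paper: the identical decomposition $\frac{n-j}{(N-j)^{2}}=\frac{1}{N-j}-\frac{N-n}{(N-j)^{2}}$ followed by the same two Riemann comparisons, $\sum_{j=1}^{n-1}\frac{1}{N-j}\leq\int_{1}^{n}\frac{\mathrm{d}x}{N-x}$ and $\sum_{j=1}^{n-1}\frac{1}{(N-j)^{2}}\geq\int_{0}^{n-1}\frac{\mathrm{d}x}{(N-x)^{2}}$, with the same final simplification. Your explicit remark about the directions of the two integral comparisons and the degenerate case $n=N$ are welcome clarifications but do not change the argument.
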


\begin{proof}
We have 
\begin{dmath*}
D\left(\left.U_{n}\right\Vert V_{n}\right)  \leq\left(C-1\right)\sum_{j=1}^{n-1}\frac{n-j}{\left(N-j\right)^{2}}.\\
  =\left(C-1\right)\sum_{j=1}^{n-1}\left(\frac{1}{N-j}-\frac{N-n}{\left(N-j\right)^{2}}\right)\\
  =\left(C-1\right)\left(\sum_{j=1}^{n-1}\frac{1}{N-j}-\left(N-n\right)\sum_{j=1}^{n-1}\frac{1}{\left(N-j\right)^{2}}\right)\,.
\end{dmath*}
Now each of these terms can be bounded by an integral.
\begin{dmath*}
\left(C-1\right)\left(\sum_{j=1}^{n-1}\frac{1}{N-j}-\left(N-n\right)\sum_{j=1}^{n-1}\frac{1}{\left(N-j\right)^{2}}\right)\\
\leq\left(C-1\right)\left(\int_{1}^{n}\frac{1}{N-x}\,\mathrm{d}x-\left(N-n\right)\int_{0}^{n-1}\frac{1}{\left(N-x\right)^{2}}\,\mathrm{d}x\right)\\
=\left(C-1\right)\left(\ln\left(\frac{N-1}{N-n}\right)-\left(N-n\right)\left(\frac{1}{N-n+1}-\frac{1}{N}\right)\right)\\
=\left(C-1\right)\left(\ln\left(\frac{N-1}{N-n}\right)-\left(\frac{N-n+1-1}{N-n+1}-\frac{N-n}{N}\right)\right)\\
=\left(C-1\right)\left(\ln\left(\frac{N-1}{N-n}\right)+\frac{1}{N-n+1}-\frac{n}{N}\right).
\end{dmath*}
\end{proof}

\section{Asymptotic results}

The upper bounds are approximately achieved in the extreme case where
$K=1$ and $n=2.$ In this case the hypergeometric distribution is
given by $\Pr\left(U_{2}=0\right)=1-\nicefrac{2}{N}$ and $\Pr\left(U_{2}=1\right)=\nicefrac{2}{N}.$
The corresponding binomial distribution is given by $\Pr\left(V_{2}=0\right)=\left(1-\nicefrac{1}{N}\right)^{2}$
and $\Pr\left(V_{2}=1\right)=2\cdot\nicefrac{1}{N}\cdot\left(1-\nicefrac{1}{N}\right)$.
Therefore the divergence is
\begin{dmath*}
D\left(U_{2}\left\Vert V_{2}\right.\right)  =\left(1-\frac{2}{N}\right)\ln\frac{1-\frac{2}{N}}{\left(1-\nicefrac{1}{N}\right)^{2}}+\frac{2}{N}\ln\frac{\nicefrac{2}{N}}{\nicefrac{2}{N}\cdot\left(1-\nicefrac{1}{N}\right)}\\
  =-\left(1-\frac{2}{N}\right)\ln\left(1+\frac{1}{N^{2}\left(1-\frac{2}{N}\right)}\right)-\frac{2}{N}\ln\left(1-\nicefrac{1}{N}\right).
\end{dmath*}
Therefore 
\[
N^{2}\cdot D\left(U_{2}\left\Vert V_{2}\right.\right)\to-1+2=1\textrm{ for }N\to\infty.
\]
The lower bound is 
\[
D\left(U_{2}\left\Vert V_{2}\right.\right)\geq\frac{1}{2\left(N-1\right)^{2}}.
\]
Therefore we cannot have a distribution independent upper bound that
is less than the twice the lower bound. 

The lower bounds (\ref{eq:*}) and (\ref{eq:optLower}) are tight in the sense that it
has the correct asymptotic behavior if $N$ tends to infinity and
$\nicefrac{n}{N}$ converges. In order to prove this we have used
the upper bound with four terms (\ref{eq:oevre}). We will also use
a slightly different expansion. 
\begin{theorem}
\label{thm:Asymptotisk}Assume that $n_{\ell}$ and $N_{\ell}$ are
increasing sequences of natural numbers such that $n_{\ell}<N_{\ell}$
and the number of colors $C$ is fixed. Assume further that there
exists $\epsilon>0$ such that $p_{c}\geq\epsilon$ for all $\ell.$
Assume finally that $1-\frac{n_{\ell}}{N_{\ell}}\to r$ for $\ell\to\infty.$
Then
\[
D\left(U_{n_{\ell}}\left\Vert V_{n_{\ell}}\right.\right)\to\left(C-1\right)\frac{r-1-\ln\left(r\right)}{2}
\]
for $\ell\to\infty.$
\end{theorem}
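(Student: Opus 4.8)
The plan is to establish matching asymptotic upper and lower bounds. The lower bound is essentially free: the theorem immediately preceding this one gives $D(U_n\Vert V_n)\geq(C-1)\frac{\ln(N/(N-n+1))-(n-1)/(N-1)}{2}$, and along the sequence $N/(N-n+1)\to 1/r$ while $(n-1)/(N-1)\to 1-r$, so that bound converges to $(C-1)\frac{r-1-\ln r}{2}$. (If $r=0$ this limit is $+\infty$ and the lower bound already forces $D\to\infty$, so from now on I assume $r>0$; then $N_\ell-n_\ell\geq\frac r2 N_\ell$ eventually, hence $N-j\geq N-n\to\infty$ uniformly over $1\leq j\leq n-1$, which is what makes the error terms below manageable.) So the real task is the matching upper bound $\limsup_\ell D(U_{n_\ell}\Vert V_{n_\ell})\leq(C-1)\frac{r-1-\ln r}{2}$.

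First I would use Lemma~\ref{lem:mutualExp} to write $D(U_n\Vert V_n)=\sum_{j=1}^{n-1}(n-j)\,I(X^j,X_{j+1}\mid X^{j-1})$ and note $I(X^j,X_{j+1}\mid X^{j-1})=E_{X^{j-1}}\!\big[I(X_j,X_{j+1}\mid X^{j-1})\big]$, where, conditionally on $X^{j-1}$, the pair $(X_j,X_{j+1})$ consists of two draws without replacement from the residual population of size $M=N-j+1$ with colour counts $m_c$ (put $\pi_c=m_c/M$). This conditional mutual information can be written out exactly: the likelihood ratio of the joint law over the product of marginals is $\frac{M}{M-1}$ on each off-diagonal colour pair and $1-\frac{M-m_c}{m_c(M-1)}$ on the diagonal pair $(c,c)$, so with $\phi$ as in the paper,
\[
I(X_j,X_{j+1}\mid X^{j-1})=\sum_c\pi_c^2\,\phi\!\Big(1-\frac{M-m_c}{m_c(M-1)}\Big)+\Big(1-\sum_c\pi_c^2\Big)\phi\!\Big(1+\frac1{M-1}\Big).
\]
The key algebraic observation is that replacing each $\phi$ by its quadratic term $\frac12(x-1)^2$ produces, \emph{regardless of the composition $(m_c)$}, exactly $\frac{C-1}{2(M-1)^2}=\frac{C-1}{2(N-j)^2}$; this uses only $\sum_c(M-m_c)^2=(C-2)M^2+\sum_c m_c^2$. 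Bounding $\phi$ by inequality~(\ref{eq:oevre}) for the diagonal terms, and by a plain Taylor expansion for the near-diagonal term $\phi(1+\frac1{M-1})$ (whose argument tends to $1$), one obtains $I(X_j,X_{j+1}\mid X^{j-1})\leq\frac{C-1}{2(N-j)^2}+R$, where $R$ collects cubic and quartic powers of $\frac{M-m_c}{m_c(M-1)}$ and of $\frac1{M-1}$.

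The remainder $R$ is $O\!\big((N-j)^{-3}\big)$ as long as every colour is still well represented, i.e.\ $m_c\geq\frac\epsilon2 M$ for all $c$, since then $\big|\frac{M-m_c}{m_c(M-1)}\big|=O(1/M)$. \textbf{The main obstacle} is the rare residual compositions in which some colour has been nearly exhausted: there a diagonal $\phi$-value can be of constant size, so $R$ is of the same order as the leading term. This I would handle probabilistically. The count $B_c=M-m_c$ of colour-$c$ balls among the first $j-1$ draws is hypergeometric with mean $(j-1)p_c$, so Hoeffding's inequality for sampling without replacement gives $\Pr(m_c<\frac\epsilon2 M)=\Pr\big(B_c-E[B_c]>(p_c-\frac\epsilon2)M\big)\leq\exp(-\frac{\epsilon^2M^2}{2N})\leq\exp(-c_{\epsilon,r}N)$, uniformly in $j$ (using $p_c\geq\epsilon$ and $M\geq\frac r2 N$). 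On the complement of this exponentially unlikely ``bad'' event one simply uses $I(X_j,X_{j+1}\mid X^{j-1})\leq\ln C$.

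Putting the pieces together,
\[
D(U_n\Vert V_n)\leq\frac{C-1}2\sum_{j=1}^{n-1}\frac{n-j}{(N-j)^2}+O\!\Big(\sum_{j=1}^{n-1}\frac{n-j}{(N-j)^3}\Big)+(\ln C)\,Ce^{-c_{\epsilon,r}N}\sum_{j=1}^{n-1}(n-j).
\]
Since $N-j\geq N-n\geq\frac r2 N$, the second term is $O(1/N)$ and the third is $O(N^2e^{-c_{\epsilon,r}N})$, so both vanish; and $\sum_{j=1}^{n-1}\frac{n-j}{(N-j)^2}\to r-1-\ln r$, because the proofs of the two preceding theorems sandwich this sum between $\ln\frac{N}{N-n+1}-\frac{n-1}{N-1}$ and $\ln\frac{N-1}{N-n}+\frac1{N-n+1}-\frac nN$, each of which converges to $r-1-\ln r$. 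Hence $\limsup_\ell D(U_{n_\ell}\Vert V_{n_\ell})\leq(C-1)\frac{r-1-\ln r}{2}$, which together with the lower bound gives the claim. The one genuinely delicate point is the uniformity in $j$ of the concentration estimate and of the good-event expansion; that uniformity is exactly what lets the $e^{-\Omega(N)}$ failure probability dominate the merely polynomial weights $(n-j)(N-j)^{-2}$.
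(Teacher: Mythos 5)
Your argument is correct, but it takes a genuinely different route from the paper's. The paper does not invoke Lemma~\ref{lem:mutualExp} here; it uses the chain rule $D\left(U_{n}\Vert V_{n}\right)=\sum_{m}D\left(X_{m+1}\Vert Y_{m+1}\mid X^{m}\right)$, in which each summand compares the conditional next-draw distribution with the \emph{fixed} marginal $\left(p_{c}\right)$, and then applies the globally valid quartic bound (\ref{eq:oevre}) to $\phi\bigl(\frac{(k_{c}-h)/(N-m)}{p_{c}}\bigr)$. Taking expectations converts the three polynomial terms into the second, third and fourth central moments of the hypergeometric count, which are available in closed form; the variance term yields the limit $\left(C-1\right)\frac{r-1-\ln r}{2}$ and the hypothesis $p_{c}\geq\epsilon$ makes the other two vanish. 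Because that polynomial bound holds for all $x\geq0$, the paper needs no concentration argument. You instead expand the pairwise conditional mutual information around the \emph{residual} composition $\left(\pi_{c}\right)$; your identity that the quadratic terms sum to exactly $\frac{C-1}{2(M-1)^{2}}$ independently of the composition is correct and is what produces the tight constant, but the local expansion breaks down when a colour is nearly exhausted, so the Hoeffding--Serfling bound for sampling without replacement is genuinely needed (applying (\ref{eq:oevre}) termwise would not rescue you there: for $m_{c}=1$ the quartic term is of the same order as the leading term, as you correctly flagged). Both proofs are sound; the paper's is shorter because the explicit moment formulas replace the concentration step, while yours avoids those formulas and, incidentally, is more explicit than the paper about the matching lower bound. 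One small slip: the number of colour-$c$ balls among the first $j-1$ draws is $k_{c}-m_{c}$, not $M-m_{c}$, though the deviation inequality you then write down is the correct one.
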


\begin{proof}
First we note that 
\begin{dmath*}
D\left(\left.U_{n}\right\Vert V_{n}\right)  =D\left(\left.X_{n}\right\Vert Y_{n}\right)\\
  =\sum_{m=1}^{n-1}D\left(\left.X_{m+1}\right\Vert Y_{m+1}\left|X_{m}\right.\right)
\end{dmath*}
where 
\[
D\left(\left.X_{m+1}\right\Vert Y_{m+1}\left|X_{m}=x\right.\right)=\sum_{c=1}^{C}R\left(m,c\right)
\]
and where 
\begin{dmath*}
R\left(m,c\right)  =\sum_{h}\Pr\left(U\left(m,c\right)=h\right)p_{c}\phi\left(\frac{\frac{k_{c}-h}{N-m}}{p_{c}}\right)\,.
\end{dmath*}
First we note that
\begin{dmath*}
\frac{\frac{k_{c}-h}{N-m}}{p_{c}}  =\frac{\frac{k_{c}-h}{N-m}-p_{c}}{p_{c}}=\frac{\frac{k_{c}-h-Np_{c}+mp_{c}}{N-m}-p_{c}}{p_{c}}=\frac{mp_{c}-h}{p_{c}\left(N-m\right)}.
\end{dmath*}
Therefore 
\begin{dmath*}
R\left(m,c\right)  =\sum_{h}\Pr\left(U\left(m,c\right)\hiderel=h\right)p_{c}\phi\left(\frac{mp_{c}-h}{p_{c}\left(N-m\right)}\right)\\
 \leq\sum_{h}\Pr\left(U\left(m,c\right)\hiderel=h\right)p_{c}\left(\begin{array}{c}
\frac{1}{2}\left(\frac{mp_{c}-h}{p_{c}\left(N-m\right)}\right)^{2}\\
-\frac{1}{6}\left(\frac{mp_{c}-h}{p_{c}\left(N-m\right)}\right)^{3}\\
+\frac{1}{3}\left(\frac{mp_{c}-h}{p_{c}\left(N-m\right)}\right)^{4}
\end{array}\right)\\
  =\frac{1}{2p_{c}\left(N-m\right)^{2}}\sum_{h}\Pr\left(U\left(m,c\right)\hiderel=h\right)\left(h-mp_{c}\right)^{2}\\
  +\frac{1}{6p_{c}^{2}\left(N-m\right)^{3}}\sum_{h}\Pr\left(U\left(m,c\right)\hiderel=h\right)\left(h-mp_{c}\right)^{3}\\
  +\frac{1}{3p_{c}^{3}\left(N-m\right)^{4}}\sum_{h}\Pr\left(U\left(m,c\right)\hiderel=h\right)\left(h-mp_{c}\right)^{4}.
\end{dmath*}
These three terms are evaluated separately. 

The second order term is 
\begin{dmath*}
\frac{1}{2p_{c}\left(N-m\right)^{2}}\sum_{h}\Pr\left(U\left(m,c\right)=h\right)\left(h-mp_{c}\right)^{2}
=\frac{1}{2p_{c}\left(N-m\right)^{2}}mp_{c}\left(1-p_{c}\right)\frac{N-m}{N-1}
=\frac{m\left(1-p_{c}\right)}{2\left(N-m\right)\left(N-1\right)}.
\end{dmath*}
Summation over colors $c$ gives 
\[
\frac{m\left(C-1\right)}{2\left(N-m\right)\left(N-1\right)}.
\]
Summation over $m$ gives 
\[
\frac{C-1}{2\left(N-1\right)}\sum_{m=1}^{n-1}\frac{m}{N-m}.
\]
As $N$ tends to infinity the sum can be approximated by the integral
\begin{dmath*}
\int_{0}^{n}\frac{x}{N-x}\,\mathrm{d}x  =\left[-N\ln\left(N-x\right)-x\right]_{0}^{n}\\
  =N\ln\left(\frac{N}{N-n}\right)-n.
\end{dmath*}
Therefore 
\begin{dmath*}
\lim_{\ell\to\infty}\frac{C-1}{2\left(N-1\right)}\sum_{m=1}^{n-1}\frac{m}{N-m}  =\frac{C-1}{2}\lim_{\ell\to\infty}\frac{N\ln\left(\frac{N}{N-n}\right)-n}{N-1}\\
  =\left(C-1\right)\frac{r-1-\ln\left(r\right)}{2}.
\end{dmath*}

The third order term is 
\begin{dmath*}
\frac{1}{6p_{c}^{2}\left(N-m\right)^{3}}\sum_{h}\Pr\left(U\left(m,c\right)\hiderel=h\right)\left(h-mp_{c}\right)^{3}\\
=\frac{mp_{c}\left(1-p_{c}\right)\left(1-2p_{c}\right)\frac{\left(N-m\right)\left(N-2m\right)}{\left(N-1\right)\left(N-2\right)}}{6p_{c}^{2}\left(N-m\right)^{3}}\\
=\frac{\left(1-p_{c}\right)\left(1-2p_{c}\right)}{6p_{c}}\cdot\frac{m\left(N-2m\right)}{\left(N-m\right)^{2}\left(N-1\right)\left(N-2\right)}.
\end{dmath*}
Since $p_{c}\geq\epsilon$ we have 
\[
\sum_{c=1}^{C}\left|\frac{\left(1-p_{c}\right)\left(1-2p_{c}\right)}{6p_{c}}\right|\leq\sum_{c=1}^{C}\frac{1}{6\epsilon}=\frac{C}{6\epsilon}.
\]
Since $m\leq n$ we have 
\begin{dmath*}
\sum_{m=1}^{n-1}\left|\frac{m\left(N-2m\right)}{\left(N-m\right)^{2}\left(N-1\right)\left(N-2\right)}\right|  \leq\sum_{m=1}^{n-1}\frac{nN}{\left(N-n\right)^{2}\left(N-1\right)\left(N-2\right)}\\
  \leq\frac{n^{2}N}{\left(N-n\right)^{2}\left(N-1\right)\left(N-2\right)}.
\end{dmath*}
We see that the thrid order term tends to zero as $\ell$ tends to
$\infty.$

The fourth term is 
\[
\frac{1}{3p_{c}^{3}\left(N-m\right)^{4}}\sum_{h}\Pr\left(U\left(m,c\right)=h\right)\left(h-mp_{c}\right)^{4}.
\]
Using the formula for the fourth central moment of the hypergeometric
distribution we get
\begin{dmath*}
\left(\frac{\begin{array}{c}
\left(N-1\right)\left(\begin{array}{c}
N\left(N-1\right)-6m\left(N-m\right)\\
6N^{2}p_{c}\left(1-p_{c}\right)
\end{array}\right)\\
6mp_{c}\left(1-p_{c}\right)\left(N-m\right)\left(5N-6\right)
\end{array}}{mp_{c}\left(1-p_{c}\right)\left(N-m\right)\left(N-2\right)\left(N-3\right)}+3\right)\frac{\left(mp_{c}\left(1-p_{c}\right)\frac{N-m}{N-1}\right)^{2}}{3p_{c}^{3}\left(N-m\right)^{4}}\leq
\left(\frac{N^{3}+30np_{c}\left(1-p_{c}\right)N^{2}}{mp_{c}\left(1-p_{c}\right)\left(N-n\right)\left(N-2\right)\left(N-3\right)}+3\right)\frac{m^{2}p_{c}^{2}\left(1-p_{c}\right)^{2}}{3p_{c}^{3}\left(N-n\right)^{4}}\leq
\frac{\left(N^{3}+30np_{c}\left(1-p_{c}\right)N^{2}\right)m}{3p_{c}^{2}\left(N-2\right)\left(N-3\right)\left(N-n\right)^{5}}+\frac{m^{2}}{p_{c}\left(N-n\right)^{4}}\leq
\frac{nN^{3}+30n^{2}N^{2}}{\epsilon^{2}\left(N-2\right)\left(N-3\right)\left(N-n\right)^{5}}+\frac{n^{2}}{\epsilon\left(N-n\right)^{4}}.
\end{dmath*}
Summation over $c$ and $n$ has the effect of multiplying by $\left(C-1\right)\left(n-1\right).$
Since the numerators are of lower degree than the denominators the
fourth order term will tend to zero as $\ell$ tend to $\infty.$
\end{proof}

\section*{Acknowledgement }

A draft of this manuscript was ready before the tragic death of my
dear friend and colleague Franti{\v s}ek Mat{\' u}{\v s} (Fero). He was a perfectionist
and was not satisfied with certain technical details and with the notation.
I hope the manuscript in its present form will live up to his high
standards.

\appendix

\section{Bounding Taylor polynomials \label{sec:BoundingTaylorpolynomials}}

Let $\phi\left(x\right)=x\ln x-\left(x-1\right)$ with the convention
that $\phi\left(0\right)=1.$ Then the derivatives are

\begin{dgroup*}
\begin{dmath*}\phi\left(x\right)  =x\ln\left(x\right)-\left(x-1\right)\,,\end{dmath*}
\begin{dmath*}\phi'\left(x\right)  =\ln\left(x\right)\,,\end{dmath*}
\begin{dmath*}\phi''\left(x\right)  =\frac{1}{x}\,,\end{dmath*}
\begin{dmath*}\phi^{\left(3\right)}\left(x\right)  =\frac{\textrm{-}1}{x^{2}}\,,\end{dmath*}
\begin{dmath*}\phi^{\left(4\right)}\left(x\right)  =\frac{2}{x^{3}}\,,\end{dmath*}
\begin{dmath*}\phi^{\left(5\right)}\left(x\right)  =\frac{\textrm{-}6}{x^{4}}\,.\end{dmath*}
\end{dgroup*}
Evaluations at $x=1$ give
\begin{dgroup*}
\begin{dmath*}\phi\left(1\right)  =0\,,\end{dmath*}
\begin{dmath*}\phi'\left(1\right)  =0\,,\end{dmath*}
\begin{dmath*}\phi''\left(1\right)  =1\,,\end{dmath*}
\begin{dmath*}\phi^{\left(3\right)}\left(1\right)  =\textrm{-}1\,,\end{dmath*}
\begin{dmath*}\phi^{\left(4\right)}\left(1\right)  =2\,,\end{dmath*}
\begin{dmath*}\phi^{\left(5\right)}\left(1\right)  =\textrm{-}6\,.\end{dmath*}
\end{dgroup*}
Since the even derivates are positive the odd Taylor polynomials give
lower bounds, so we have 
\begin{dgroup*}
\begin{dmath*}
\phi\left(x\right)  \geq0\,,\end{dmath*}
\begin{dmath*}\phi\left(x\right)  \geq\frac{1}{2}\left(x-1\right)^{2}-\frac{1}{6}\left(x-1\right)^{3}.
\end{dmath*}
\end{dgroup*}
Since the odd derivates are negative we have 
\begin{dgroup*}
\begin{dmath*}\phi\left(x\right)  \leq\frac{1}{2}\left(x-1\right)^{2},\end{dmath*}
\begin{dmath*}\phi\left(x\right)  \leq\frac{1}{2}\left(x-1\right)^{2}-\frac{1}{6}\left(x-1\right)^{3}+\frac{1}{12}\left(x-1\right)^{4},\end{dmath*}
\end{dgroup*}
for $x\geq1$ and the reversed inequalities for $x\leq1$. We will
add a positive term to these inequalities in order to get an upper
bound that holds for all $x\geq0.$ The inequalities are
\begin{dgroup*}
\begin{dmath*}\phi\left(x\right)  \leq\left(x-1\right)^{2},\end{dmath*}
\begin{dmath*}\phi\left(x\right)  \leq\frac{1}{2}\left(x-1\right)^{2}-\frac{1}{6}\left(x-1\right)^{3}+\frac{1}{3}\left(x-1\right)^{4}.
\end{dmath*}
\end{dgroup*}
 We have to prove these inequalities in the interval $[0,1]$.

For the first inequality we define $g\left(x\right)=\left(x-1\right)^{2}-f\left(x\right),$
and have to prove that this function is non-negative. We have $g(0)=g(1)=0$
so it is sufficient to prove that $g$ is first increasing and then
decreasing, or equivalently that $g'$ is first positive and then
negative. We have $g'\left(x\right)=2\left(x-1\right)-\ln x$ so that
$g'\left(x\right)\to\infty$ for $x\to0$ and $g'\left(1\right)=0.$
Therefore it is sufficient to prove that $g'$ is first decreasing
and then increasing. $g''\left(x\right)=2-\nicefrac{1}{x},$ which
is negative for $x<\nicefrac{1}{2}$ and positive for $x>\nicefrac{1}{2}$.
The second inequality is proved in the same way except that we have
to differentiate four times. 

\section{Proof of Lemma 4.2\label{sec:Proof-of-Lemma}}

For $j=1$ we have
\begin{dmath*}
I\left(X^{j},X_{j+1}\mid X^{j-1}\right)  =I\left(X_{1},X_{2}\right)\\
  =D\left(\left.\left.X_{2}\right\Vert Y_{2}\right|X_{1}\right).
\end{dmath*}
Now 
\[
D\left(\left.\left.X_{2}\right\Vert Y_{2}\right|X_{1}\right)=\sum_{x_{1}}\Pr\left(X_{1}=x_{1}\right)D\left(\left.\left.X_{2}\right\Vert Y_{2}\right|X_{1}=x_{1}\right)
\]
and 
\[
D\left(\left.\left.X_{2}\right\Vert Y_{2}\right|X_{1}=x_{1}\right)=\sum_{c=1}^{C}R\left(1,c\right)
\]
where
\[
R\left(1,c\right)=\sum_{h}\Pr\left(U\left(1,c\right)=h\right)p_{c}\phi\left(\frac{\frac{k_{c}-h}{N-1}}{p_{c}}\right).
\]
Using the upper bound (\ref{eq:chi}) we get
\begin{dmath*}
R\left(1,c\right)  \leq\sum_{h}\Pr\left(U\left(1,c\right)\hiderel=h\right)p_{c}\left(\frac{\frac{k_{c}-h}{N-1}}{p_{c}}-1\right)^{2}
  =\frac{p_{c}\left(1-p_{c}\right)}{p_{c}\left(N-1\right)^{2}}\\
  =\frac{1-p_{c}}{\left(N-1\right)^{2}}.
\end{dmath*}
Summation over the colors gives 
\[
D\left(\left.\left.X_{m+1}\right\Vert Y_{m+1}\right|X^{m}\right)\leq\frac{C-1}{\left(N-1\right)^{2}}\,.
\]

\begin{proof}
In order to get a lower bound we calculate 
\begin{dmath*}
\sum_{h}\Pr\left(U\left(1,c\right)\hiderel=h\right)p_{c}\left(\frac{1}{2}\left(\frac{\frac{k_{c}-h}{N-1}}{p_{c}}-1\right)^{2}-\frac{1}{6}\left(\frac{\frac{k_{c}-h}{N-1}}{p_{c}}-1\right)^{3}\right)
=\sum_{h}\Pr\left(U\left(1,c\right)\hiderel=h\right)\frac{\left(h-p_{c}\right)^{2}}{2p_{c}\left(N-1\right)^{2}}+\sum_{h}\Pr\left(U\left(1,c\right)\hiderel=h\right)\frac{\left(h-p_{c}\right)^{3}}{6p_{c}^{2}\left(N-1\right)^{3}}.
\end{dmath*}
These terms will be evaluated separately. 

As before 
\[
\sum_{h}\Pr\left(U\left(1,c\right)=h\right)\frac{\left(h-p_{c}\right)^{2}}{2p_{c}\left(N-1\right)^{2}}=\frac{\left(n-j\right)\left(1-p_{c}\right)}{2\left(N-1\right)^{2}}.
\]
Summation over $c$ gives 
\[
\left(C-1\right)\sum_{j=1}^{n-1}\frac{n-j}{\left(N-j\right)^{2}}.
\]

The third term is 
\begin{dmath*}
\sum_{h}\Pr\left(U\left(1,c\right)=h\right)\frac{\left(h-p_{c}\right)^{3}}{6p_{c}^{2}\left(N-1\right)^{3}}  =\frac{p_{c}\left(1-p_{c}\right)\left(1-2p_{c}\right)\frac{\left(N-1\right)\left(N-2\right)}{\left(N-1\right)\left(N-2\right)}}{6p_{c}^{2}\left(N-1\right)^{3}}\\
  =\frac{\left(1-p_{c}\right)\left(1-2p_{c}\right)}{6p_{c}\left(N-1\right)^{3}}.
\end{dmath*}
Summation over $c$ gives
\[
\frac{Q}{6\left(N-1\right)^{3}}
\]
where 
\begin{dmath*}
Q =\sum_{c=1}^{C}\frac{\left(1-p_{c}\right)\left(1-2p_{c}\right)}{p_{c}}
  =\sum_{c=1}^{C}\left(\frac{1}{p_{c}}-3+2p_{c}\right)
  =\sum_{c=1}^{C}\frac{1}{p_{c}}-3C+2\,.
\end{dmath*}
We introduce
\begin{dmath*}
\chi^{2}\left(\frac{1}{C},p_{c}\right)  =\sum_{c=1}^{C}\frac{\left(\frac{1}{C}-p_{c}\right)^{2}}{p_{c}}
  =\sum_{c=1}^{C}\left(\frac{1}{C^{2}}\cdot\frac{1}{p_{c}}-\frac{2}{C}+p_{c}\right)
 =\frac{1}{C^{2}}\sum_{c=1}^{C}\frac{1}{p_{c}}-1
\end{dmath*}

so that
\begin{dmath*}
Q  =C^{2}\chi^{2}\left(\frac{1}{C},p_{c}\right)+C^{2}-3C+2
  =C^{2}\chi^{2}\left(\frac{1}{C},p_{c}\right)+\left(C-1\right)\left(C-2\right)
  \geq0.
\end{dmath*}
\end{proof}



\end{document}